\renewcommand\@seccntformat[1]{\csname the#1\endcsname.\quad}
\newtheorem{theorem}{Theorem}[section]
\newtheorem{corollary}[theorem]{Corollary}
\newtheorem{example}[theorem]{Example}
\newtheorem{lemma}[theorem]{Lemma}
\newtheorem{proposition}[theorem]{Proposition}
\newtheorem{remark}[theorem]{Remark}
\newcommand{\norm}[3]{\ensuremath{\left\Vert#1\right\Vert_{#2}^{#3}}}
\newcommand{\abs}[3]{\ensuremath{\left\vert#1\right\vert_{#2}^{#3}}}
\begin{document}

\setcounter{page}{1}

\author[M. Ciesielski]{Maciej Ciesielski}

\address{Institute of Mathematics, Pozna\'{n} University of Technology, Piotrowo 3A, 60-965 Pozna\'{n}, Poland}
\email{{maciej.ciesielski@put.poznan.pl}}

\title[lower and upper local uniform $K$-monotonicity]{lower and upper local uniform $K$-monotonicity in symmetric spaces}

\begin{abstract}
Using the local approach to the global structure of a symmetric space $E$ we establish a relationship between strict $K$- monotonicity, lower (resp. upper) local uniform $K$-monotonicity, order continuity and the Kadec-Klee property for global convergence in measure. We also answer the question under which condition upper local uniform $K$-monotonicity concludes upper local uniform monotonicity. Finally, we present a correlation between $K$-order continuity and lower local uniform $K$-monotonicity in a symmetric space $E$ under some additional assumptions on $E$.  

\end{abstract}

\maketitle

\bigskip\ 

{\small \underline{2010 Mathematics Subjects Classification: 46E30, 46B20, 46B42.}\hspace{1.5cm}\
	\ \ \quad\ \quad  }\smallskip\ 

{\small \underline{Key Words and Phrases:}\hspace{0.15in} Symmetric space, Lorentz space, $K$-order continuity, lower (upper) local uniform $K$-monotonicity, the Kadec-Klee property for global convergence in measure.}

\bigskip\ \ 

\section{Introduction}

The first essential result devoted to upper local uniform $K$- monotonicity ($ULUKM$) was published in \cite{ChDSS} by Chilin, Dodds, Sedaev, and Sukochev in 1996. Authors presented a complete characterization of $ULUKM$ written in terms of strict $K$-monotonicity and the Kadec-Klee property for global convergence in measure in symmetric spaces, among others. Recently, many interesting results have appeared in \cite{chkm-geom-positivity,Cies-geom,CieKolPlu,CieKolPluSKM,hkm-geo-prop}, where there have been explored the global and local $K$-monotonicity structure of Banach spaces. 

The crucial inspiration for our discussion was found in paper \cite{Cies-JAT}, where there has been studied an application of strict $K$-monotonicity and $K$-order continuity to the best dominated approximation with respect to the Hardy-Littlewood-P\'olya relation $\prec$. It is worth mentioning that in view of the previous result, in \cite{Cies-SKM-KOC} there has been investigated, among others, a full criteria for $K$-order continuity in symmetric spaces.  

The main goal of this manuscript is an investigation dedicated to a complete characterization of strict $K$-monotonicity and $K$-order continuity as well as upper and lower local uniform $K$-monotonicity in symmetric spaces. We organize the paper in the following way. Preliminaries contain all necessary definitions and notions. 
 
In the section 3 we focus on a characterization of lower and upper local uniform $K$-monotonicity in symmetric space $E$. First, we investigate a relation between a point of lower local uniform $K$-monotonicity and a point of lower local uniform monotonicity. We also characterize a full correlation between a point of lower local uniform $K$-monotonicity and a conjunction of a point of order continuity and a point of lower $K$-monotonicity and also an $H_g$ point in a symmetric space $E$. Next, we show a correspondence between a point of upper local uniform $K$-monotonicity and a point of upper local uniform monotonicity and also an $H_g$ point in $E$ under some additional assumption. In our investigation we don't restrict ourself only to the local approach to $K$-monotonicity structure, but we also discuss as a consequence a complete characterization of global $K$-monotonicity properties in a symmetric space $E$. We answer the crucial question under which condition lower local uniform $K$-monotonicity and upper local uniform $K$-monotonicity coincide in symmetric spaces. In the spirit of the previous result, we also describe an essential connection between a point of $K$-order continuity and a point of lower local uniform $K$-monotonicity and also an $H_g$ point in a symmetric space $E$. It is worth mentioning that several results and examples concerning respective global properties are also presented in this section.

\section{Preliminaries}

Let $\mathbb{R}$, $\mathbb{R}^+$ and $\mathbb{N}$ be the sets of reals, nonnegative reals and positive integers, respectively. In a Banach space $(X,\norm{\cdot}{X}{})$ we use a notation $S(X)$ (resp. $B(X))$ for the unit sphere (resp. the closed unit ball). A nonnegative mapping $\phi$ given on $\mathbb{R}^+$ is called \textit{quasiconcave} if $\phi(t)$ is increasing and $\phi(t)/t$ is decreasing on $\mathbb{R}^+$ and also $\phi(t)=0\Leftrightarrow{t=0}$. Denote as usual by $\mu$ the Lebesgue measure on $I=[0,\alpha)$, where $\alpha =1$ or $\alpha =\infty$, and by $L^{0}$ the set of all (equivalence classes of) extended real valued Lebesgue measurable functions on $I$. We also use the notation $A^{c}=I\backslash A$ for any measurable set $A$. Let us recall that a Banach lattice $(E,\Vert \cdot \Vert _{E})$ is said to be a \textit{Banach
function space} (or a \textit{K\"othe space}) if it is a sublattice of $L^{0}$ satisfying the following conditions
\begin{itemize}
\item[(1)] If $x\in L^0$, $y\in E$ and $|x|\leq|y|$ a.e., then $x\in E$ and $%
\|x\|_E\leq\|y\|_E$.
\item[(2)] There exists a strictly positive $x\in E$.
\end{itemize}
In addition, we employ in our investigation the symbol $E^{+}={\{x \in E:x \ge 0\}}$. 

Given $x\in E$ is said to be a \textit{point of order continuity} if for any
sequence $(x_{n})\subset{}E^+$ with $x_{n}\leq \left\vert x\right\vert 
$ and $x_{n}\rightarrow 0$ a.e. we have $\left\Vert x_{n}\right\Vert
_{E}\rightarrow 0.$ A Banach function space $E$ is called \textit{order continuous 
}(shortly $E\in \left( OC\right) $) if any element $x\in{}E$ is a point of order continuity (see \cite{LinTza}). It is said that a Banach function space $E$ has the \textit{Fatou property} whenever for every $\left( x_{n}\right)\subset{}E^+$, $\sup_{n\in \mathbb{N}}\Vert x_{n}\Vert
_{E}<\infty$ and $x_{n}\uparrow x\in L^{0}$ we have $x\in E$ and $\Vert x_{n}\Vert _{E}\uparrow\Vert x\Vert
_{E}$. In addition, we assume that $E$ has the Fatou property, unless it is mentioned otherwise.

An element $x\in E^{+}$ is called a \textit{point of upper local uniform monotonicity} (resp. a \textit{point of lower local uniform monotonicity}) shortly a $ULUM$ \textit{point} (resp. an $LLUM$ \textit{point}) if for any $(x_{n})\subset E$ such that $x\leq x_{n}$ and $\left\Vert{}x_{n}\right\Vert _{E}\rightarrow \left\Vert x\right\Vert _{E}$ (resp. $x_n\leq{x}$ and $\norm{x_n}{E}{}\rightarrow\norm{x}{E}{}$), we get $\left\Vert x_{n}-x\right\Vert _{E}\rightarrow 0$. Let us recall that if each
point of $E^{+}\setminus \left\{ 0\right\} $ is a $ULUM$ point (resp. an $LLUM$ point), then we say that $E$ is \textit{upper local uniformly monotone} shortly $E\in \left(ULUM\right)$ (resp. \textit{lower local uniformly monotone} shortly $E\in \left(LLUM\right)$).

Given $x\in{E}$ is said to be an $H_g$ \textit{point} (resp. an $H_l$ \textit{point}) in $E$ if for any sequence $(x_n)\subset{E}$ with $x_n\rightarrow{x}$ globally in measure (resp. locally in measure) and $\norm{x_n}{E}{}\rightarrow\norm{x}{E}{}$, then $\left\Vert x_n-x\right\Vert _{E}\rightarrow{0}$. Let us recall that the space $E$ has the \textit{Kadec-Klee property for global convergence in measure} (resp. \textit{Kadec-Klee property for local convergence in measure}) if any element $x\in{E}$ is an $H_g$ point (resp. an $H_l$ point) in $E$ (see \cite{ChDSS,CieKolPlu}). 

For any function $x\in L^{0}$ we define its \textit{distribution function} by 
\begin{equation*}
d_{x}(\lambda) =\mu\left\{ s\in [ 0,\alpha) :\left\vert x\left(s\right) \right\vert >\lambda \right\},\qquad\lambda \geq 0.
\end{equation*}
The \textit{decreasing rearrangement} for any element $x\in L^{0}$ is given by 
\begin{equation*}
x^{\ast }\left( t\right) =\inf \left\{ \lambda >0:d_{x}\left( \lambda
\right) \leq t\right\}, \text{ \ \ } t\geq 0.
\end{equation*}
In the whole paper, it is used the notation $x^{*}(\infty)=\lim_{t\rightarrow\infty}x^{*}(t)$ if $\alpha=\infty$ and $x^*(\infty)=0$ if $\alpha=1$. For any function $x\in L^{0}$ we denote the \textit{maximal function} of $x^{\ast }$ by 
\begin{equation*}
x^{\ast \ast }(t)=\frac{1}{t}\int_{0}^{t}x^{\ast }(s)ds.
\end{equation*}
Let us mention that for any function $x\in L^{0}$ it is well known that $x^{\ast }\leq x^{\ast \ast },$ $x^{\ast \ast }$ is decreasing, continuous and subadditive. For more details of $d_{x}$, $x^{\ast }$ and $x^{\ast \ast }$ see \cite{BS, KPS}. 

We say that two functions $x,y\in{L^0}$ are said to be \textit{equimeasurable} (shortly $x\sim y$) if $d_x=d_y$. A Banach function space $(E,\Vert \cdot \Vert_{E}) $ is called \textit{symmetric} or \textit{rearrangement invariant} (r.i. for short) if for any $x\in L^{0}$ and $y\in E$ with $x\sim y$, we have $x\in E$ and $\Vert x\Vert_{E}=\Vert y\Vert _{E}$. In a symmetric space $E$ we denote by $\phi_{E}$ the \textit{fundamental function} given by $\phi_{E}(t)=\Vert\chi_{(0,t)}\Vert_{E}$ for any $t\in [0,\alpha)$ (see \cite{BS}). For any two functions $x,y\in{}L^{1}+L^{\infty }$ it is defined the \textit{Hardy-Littlewood-P\'olya relation} $\prec$ by 
\begin{equation*}
x\prec y\Leftrightarrow x^{\ast \ast }(t)\leq y^{\ast \ast }(t)\text{ for
all }t>0.\text{ }
\end{equation*}

A symmetric space $E$ is called $K$-\textit{monotone} (shortly $E\in(KM)$) if for any $x\in L^{1}+L^{\infty}$ and $y\in E$ with $x\prec y,$ we have $x\in E$ and $\Vert x\Vert_{E}\leq \Vert y\Vert _{E}.$ It is well known that a symmetric space is $K$-monotone if and only if $E$ is exact interpolation space between $L^{1}$ and $L^{\infty }.$ It is worth mentioning that a symmetric space $E$ equipped with an order continuous norm or with the Fatou property is $K$-monotone (see \cite{KPS}).

An element $x\in{E}$ is said to be a \textit{point of lower $K$-monotonicity} shortly an $LKM$ \textit{point} of $E$ if for any $y\in{E}$, $x^*\neq{y^*}$ and $y\prec{x}$, then $\norm{y}{E}{}<\norm{x}{E}{}$. Let us mention that a symmetric space $E$ is called \textit{strictly $K$-monotone} (shortly $E\in(SKM)$) if any element of $E$ is an $LKM$ point. 

An element $x\in{E}$ we call a \textit{point of $K$-order continuity} of $E$ if for any sequence $(x_n)\subset{E}$ with $x_n\prec{x}$ and $x_n^*\rightarrow{0}$ a.e. we have $\norm{x_n}{E}{}\rightarrow{0}$. Recall that a symmetric space $E$ is said to be $K$-\textit{order continuous} (shortly $E\in\left(KOC\right)$) if every element $x$ of $E$ is a point of $K$-order continuity.

An element $x\in{E}$ is said to be a \textit{point of upper local uniform $K$-monotonicity} of $E$ (shortly a $ULUKM$ \textit{point}) if for any $(x_n)\subset{E}$ such that $x\prec{x_n}$ for every $n\in\mathbb{N}$ and $\norm{x_n}{E}{}\rightarrow\norm{x}{E}{}$, then $\norm{x^*-x_n^{*}}{E}{}\rightarrow{0}$. Given a point $x\in{E}$ is said to be a \textit{point of lower local uniform $K$-monotonicity} of $E$ (shortly an $LLUKM$ point) if whenever for any $(x_n)\subset{E}$ with $x_n\prec{x}$ for all $n\in\mathbb{N}$ and  $\norm{x_n}{E}{}\rightarrow\norm{x}{E}{}$, we have $\norm{x^*-x_n^{*}}{E}{}\rightarrow{0}$. A symmetric space $E$ is said to be \textit{upper local uniformly $K$-monotone} shortly $E\in(ULUKM)$ (resp. \textit{lower local uniformly $K$-monotone} shortly ($E\in(LLUKM)$) if whenever every element of $E$ is a $ULUKM$ point (resp. an $LLUKM$ point). For more details  we encourage to see \cite{ChDSS,Cies-geom,Cies-JAT,Cies-SKM-KOC,hkm-geo-prop}.

Recall that the Marcinkiewicz function space $M_{\phi}^{(*)}$ (resp. $M_{\phi}$), where $\phi$ is a quasiconcave function on $I$, is a subspace of $L^0$ such that for all $x\in{M_{\phi}^{(*)}}$ (resp. $x\in{M_\phi}$),
$$\norm{x}{M_{\phi}^{(*)}}{}=\sup_{t>0}\{x^*(t)\phi(t)\}<\infty$$
$${\left(\textnormal{resp. }\norm{x}{M_{\phi}}{}=\sup_{t>0}\{x^{**}(t)\phi(t)\}<\infty\right).}$$
Obviously,  $\norm{x}{M_{\phi}^{(*)}}{}\leq\norm{x}{M_\phi}{}$ for all $x\in{}M_{\phi}$, i.e. the embedding of $M_{\phi}$ in $M_{\phi}^{(*)}$ has norm $1$ (shortly $M_{\phi}{\hookrightarrow}M_{\phi}^{(*)}$). Moreover, it is necessary to mention that the Marcinkiewicz space $M_{\phi}^{(*)}$ (resp. $M_{\phi}$) is a r.i. quasi-Banach function space (r.i. Banach function space) with the fundamental function $\phi$ on $I$. Let us also recall that for any symmetric space $E$ with the fundamental function $\phi$ we have $E{\hookrightarrow}M_{\phi}$ the embedding with norm $1$ (see \cite{BS,KPS}). 

For given $0<p<\infty$ and a locally integrable weight function $w\geq{0}$ we define the Lorentz space $\Lambda_{p,w}$ as a subspace of $L^0$ such that
\begin{equation*}
\left\Vert x\right\Vert _{\Lambda_{p,w}}=\left( \int_{0}^{\alpha}(x^{\ast }(t))^{p}w(t)dt\right) ^{1/p}<\infty,
\end{equation*}
where $W(t)=\int_{0}^{t}w<\infty$ for any $t\in{I}$ and $W(\infty)=\infty$ in the case when $\alpha=\infty$. It is worth mentioning that the spaces $\Lambda_{p,w}$ were introduced by Lorentz in \cite{Loren} and the space $\Lambda_{p,w}$ is a norm space (resp. quasi-norm space) if and only if $1\leq{p}<\infty$ and $w$ is decreasing, see \cite{KamMal} (resp. $W$ satisfies the condition $\Delta_2$, see \cite{Haaker,KamMal}). It is also known that for any $0<p<\infty$ if $W$ satisfies the condition $\Delta_2$ and $W(\infty) = \infty$, then the Lorentz space $\Lambda_{p,w}$ is an order continuous r.i. quasi-Banach function space (see \cite{KamMal}). 

For $0<p<\infty $ and $w\in L^{0}$ a nonnegative locally integrable weight function we consider the
Lorentz space $\Gamma _{p,w}$, that is a subspace of $L^{0}$ such that 
\begin{equation*}
\Vert x\Vert _{\Gamma _{p,w}}=\norm{x^{**}}{\Lambda_{p,w}}{}=\left( \int_{0}^{\alpha }x^{\ast \ast
	p}(t)w(t)dt\right) ^{1/p}<\infty .
\end{equation*}
Unless we say otherwise, we suppose that $w$ belongs to the class $D_{p}$, i.e. 
\begin{equation*}
W(s):=\int_{0}^{s}w(t)dt<\infty \mathnormal{\ \ \ }\text{\textnormal{and}}%
\mathnormal{\ \ }W_{p}(s):=s^{p}\int_{s}^{\alpha }t^{-p}w(t)dt<\infty
\end{equation*}%
for all $0<s\leq 1 $ if $\alpha =1 $ and for all $0<s<\infty$ otherwise. It is easy to observe that if $w\in{}D_p$, then the Lorentz space $\Gamma _{p,w}$ is nontrivial. Moreover, it is clear that $\Gamma _{p,w}\subset \Lambda _{p,w}.$ On the other hand, the following inclusion $\Lambda _{p,w}\subset \Gamma _{p,w}$ holds if and only if $w\in B_{p}$ (see \cite{KMGam}). Let us also recall that $\left( \Gamma _{p,w},\Vert \cdot \Vert_{\Gamma _{p,w}}\right)$ is a r.i. quasi-Banach function space with the Fatou property and was introduced by Calder\'{o}n in \cite{Cal}. It is well known that in the case when $\alpha =\infty $ the Lorentz space $\Gamma _{p,w}$ has order continuous norm if and only if $\int_{0}^{\infty }w\left( t\right) dt=\infty$ (see  \cite{KMGam}). It is also well known that by the Lions-Peetre $K$-method (see \cite{BruKru,KPS}), the space $\Gamma_{p,w}$ is an interpolation space between $L^{1}$ and $L^{\infty }$. For more details about the properties of the spaces $\Lambda_{p,w}$ and $\Gamma _{p,w}$ the reader is referred to \cite{Cies-geom,CieKolPan,CieKolPlu,KMGam,KamMal}. 

\section{lower and upper local uniform $K$-monotonicity in symmetric spaces}

In this section we investigate a connection between lower local uniform $K$-monotonicity and lower local uniform monotonicity in symmetric spaces. We also present a complete characterization of an $LLUKM$ point in terms of a point of order continuity and an $LKM$ point.

\begin{lemma}\label{lem:1:llukm}
	Let $E$ be a symmetric space. If $x\in{E}$ is $LLUKM$ point, then $x^*(\infty)=0$. 
\end{lemma}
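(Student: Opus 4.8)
The plan is to argue by contradiction and to reduce at once to the case $\alpha=\infty$, since for $\alpha=1$ the equality $x^{*}(\infty)=0$ holds by the convention adopted in the Preliminaries. Assume therefore that $x$ is an $LLUKM$ point with $c:=x^{*}(\infty)>0$. Because $x^{*}$ is non-increasing and tends to $c$ at infinity, we have $x^{*}(t)\ge c$ for every $t\ge0$, hence $x^{*}\ge c\,\chi_{(0,\infty)}$. Applying the ideal property (1) to $x^{*}\in E$ I would first record that $\chi_{(0,\infty)}\in E$ and that $\phi_{E}(\infty):=\lim_{t\to\infty}\phi_{E}(t)=\norm{\chi_{(0,\infty)}}{E}{}$ is a finite and strictly positive number; this is precisely the place where the hypothesis $c>0$ enters.

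Next I would construct a sequence witnessing the failure of the defining condition of an $LLUKM$ point. Put $x_{n}:=x^{*}\chi_{(0,n)}$. Each $x_{n}$ is already non-increasing, so $x_{n}^{*}=x_{n}$, and from $x_{n}^{*}\le x^{*}$ the ideal property gives $x_{n}\in E$, while $x_{n}^{**}(t)=\frac{1}{t}\int_{0}^{t}x_{n}^{*}\le\frac{1}{t}\int_{0}^{t}x^{*}=x^{**}(t)$ for all $t>0$, that is $x_{n}\prec x$. The norm convergence is handled by the Fatou property: since $x_{n}^{*}=x^{*}\chi_{(0,n)}\uparrow x^{*}$ and $\sup_{n}\norm{x_{n}^{*}}{E}{}\le\norm{x^{*}}{E}{}<\infty$, one obtains $\norm{x_{n}}{E}{}=\norm{x_{n}^{*}}{E}{}\uparrow\norm{x^{*}}{E}{}=\norm{x}{E}{}$. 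Thus $(x_{n})$ satisfies both hypotheses in the definition of an $LLUKM$ point.

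Finally I would estimate the rearrangement distance from below. Since $x^{*}-x_{n}^{*}=x^{*}\chi_{(n,\infty)}\ge c\,\chi_{(n,\infty)}$ and $\chi_{(n,\infty)}\sim\chi_{(0,\infty)}$, the monotonicity of the norm together with rearrangement invariance yields
$$\norm{x^{*}-x_{n}^{*}}{E}{}\ge\norm{c\,\chi_{(n,\infty)}}{E}{}=c\,\norm{\chi_{(0,\infty)}}{E}{}=c\,\phi_{E}(\infty)>0$$
for every $n$. Hence $\norm{x^{*}-x_{n}^{*}}{E}{}$ cannot converge to $0$, which contradicts the assumption that $x$ is an $LLUKM$ point, and forces $c=0$. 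The argument is elementary and I do not foresee a real obstacle; the only points needing a little care are the use of $c>0$ to place $\chi_{(0,\infty)}$ in $E$ with positive norm and the clean application of the Fatou property, the underlying mechanism being that a flat tail of height $c$ at infinity contributes the fixed positive amount $c\,\phi_{E}(\infty)$ to the rearrangement distance that no norm-convergent truncation from below can remove.
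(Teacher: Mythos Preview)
Your proof is correct and follows essentially the same approach as the paper's own proof: both argue by contradiction, construct the truncations $x_n=x^{*}\chi_{(0,n)}$, use the Fatou property to get $\norm{x_n}{E}{}\to\norm{x}{E}{}$, and then bound $\norm{x^{*}-x_n^{*}}{E}{}$ from below by $x^{*}(\infty)\norm{\chi_{(0,\infty)}}{E}{}$ via rearrangement invariance of $\chi_{(n,\infty)}$. Your explicit reduction to $\alpha=\infty$ is a nice touch that the paper leaves implicit.
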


\begin{proof}
	Suppose for a contrary that $x^*(\infty)>0$. Define $x_n=x^*\chi_{[0,n]}$ for any $n\in\mathbb{N}$. Then, for any $n\in\mathbb{N}$ we have $0\leq{x_n}\leq{x^*}$ and also $x_n\prec{x}.$ It is clear that $x_n\uparrow{}x^*$ a.e. and $\sup_{n\in\mathbb{N}}\norm{x_n}{E}{}\leq\norm{x}{E}{}<\infty$. Hence, by the Fatou property we conclude that $\norm{x_n}{E}{}\rightarrow\norm{x}{E}{}$. Consequently, by assumption that $x$ is $LLUKM$ point it follows that 
	\begin{equation*}
	\norm{x_n^*-x^*}{E}{}\rightarrow{0}.
	\end{equation*}
	Since $x^*(\infty)>0$ we obtain $\chi_{I}\in{E}$, whence for any $n\in\mathbb{N}$, $$\norm{x_n^*-x^*}{E}{}=\norm{x^*\chi_{(n,\infty)}}{E}{}\geq\norm{x^*(\infty)\chi_{(n,\infty)}}{E}{}=x^*(\infty)\norm{\chi_{I}}{E}{}>0.$$
	So, we get a contradiction which finishes the proof.
\end{proof}

\begin{lemma}\label{lem:LLUKM=>OC}
	Let $E$ be a symmetric space and $\phi$ be the fundamental function of $E$. If $x\in{E}$ is an $LLUKM$ point and $x^*(t)\phi(t)\rightarrow{0}$ as $t\rightarrow{0^+}$, then $x$ is a point of order continuity.
\end{lemma}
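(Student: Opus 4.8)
The plan is to deduce order continuity at $x$ from two ``one-sided'' estimates, namely $\norm{x^*\chi_{(0,\delta)}}{E}{}\to 0$ as $\delta\to 0^+$ (the top of $x^*$) and $\norm{x^*\chi_{(\tau,\alpha)}}{E}{}\to 0$ as $\tau\to\alpha$ (the tail), together with a routine finite-measure estimate. First I would record two preliminary facts. By Lemma \ref{lem:1:llukm} we have $x^*(\infty)=0$, so no mass escapes to infinity. Moreover we may assume $x\neq 0$, whence $x^*(0^+):=\lim_{t\to 0^+}x^*(t)>0$; then the hypothesis $x^*(t)\phi(t)\to 0$ forces $\phi(0^+)=0$, since otherwise the (existing) product limit $x^*(0^+)\phi(0^+)$ would be positive (and $+\infty$ if $x^*(0^+)=+\infty$). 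This single consequence, $\phi(0^+)=0$, is exactly what will make a finite-measure band order continuous.

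For the tail I would feed the truncations $x_n=x^*\chi_{(0,b_n)}$, $b_n\uparrow\alpha$, into the $LLUKM$ property: each $x_n$ is decreasing, so $x_n^*=x_n$, and $x_n\le x^*$ gives $x_n\prec x$, while $x_n\uparrow x^*$ a.e. forces $\norm{x_n}{E}{}\uparrow\norm{x}{E}{}$ by the Fatou property. Hence $LLUKM$ yields $\norm{x^*-x_n}{E}{}=\norm{x^*\chi_{(b_n,\alpha)}}{E}{}\to 0$. For the top, if $x^*$ is bounded it suffices to note $\norm{x^*\chi_{(0,\delta)}}{E}{}\le x^*(0^+)\phi(\delta)\to x^*(0^+)\phi(0^+)=0$. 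If $x^*(0^+)=\infty$ I would flatten the top: for $c_n\downarrow 0$ set $x_n=x^{**}(c_n)\chi_{(0,c_n)}+x^*\chi_{(c_n,\alpha)}$. A direct comparison of integrals shows $x_n$ is decreasing and $x_n\prec x$, while $\norm{x_n}{E}{}\le\norm{x}{E}{}$ (by $K$-monotonicity, valid since $E$ has the Fatou property) and $\norm{x_n}{E}{}\ge\norm{x^*\chi_{(c_n,\alpha)}}{E}{}\uparrow\norm{x}{E}{}$, so $\norm{x_n}{E}{}\to\norm{x}{E}{}$. Then $LLUKM$ gives $\norm{(x^*-x^{**}(c_n))\chi_{(0,c_n)}}{E}{}\to 0$. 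Letting $t_n$ be the point with $x^*(t_n)=2x^{**}(c_n)$, on $(0,t_n)$ one has $x^*-x^{**}(c_n)\ge x^*/2$, so $\norm{x^*\chi_{(0,t_n)}}{E}{}\le 2\norm{(x^*-x^{**}(c_n))\chi_{(0,c_n)}}{E}{}\to 0$; since $\delta\mapsto\norm{x^*\chi_{(0,\delta)}}{E}{}$ is nondecreasing, its limit $L$ as $\delta\to 0^+$ obeys $L\le\norm{x^*\chi_{(0,t_n)}}{E}{}\to 0$, giving the top estimate.

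Finally I would assemble order continuity. Let $0\le y_n\le|x|$ with $y_n\to 0$ a.e., fix $\varepsilon>0$, and pick $\delta,\tau$ with $\norm{x^*\chi_{(0,\delta)}}{E}{}<\varepsilon$ and $\norm{x^*\chi_{(\tau,\alpha)}}{E}{}<\varepsilon$. Splitting $I$ by the level sets of $|x|$ into $H=\{|x|>x^*(\delta)\}$, the band $M=\{x^*(\tau)\le|x|\le x^*(\delta)\}$, and $L=\{|x|<x^*(\tau)\}$, a comparison of distribution functions gives $(|x|\chi_H)^*\le x^*\chi_{(0,\delta)}$ and $(|x|\chi_L)^*(s)\le x^*(s+\tau)$, so $\norm{y_n\chi_H}{E}{}<\varepsilon$ and $\norm{y_n\chi_L}{E}{}<\varepsilon$ for all $n$. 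On the band $\mu(M)<\infty$ (because $x^*(\infty)=0$) and $|x|\chi_M$ is bounded, so $y_n\chi_M\to 0$ a.e. on a set of finite measure; Egorov's theorem together with $\phi(0^+)=0$ then yields $\norm{y_n\chi_M}{E}{}\to 0$. Hence $\limsup_n\norm{y_n}{E}{}\le 2\varepsilon$, and letting $\varepsilon\to 0$ shows that $x$ is a point of order continuity.

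The main obstacle is the top estimate. Since $LLUKM$ controls only $\norm{x^*-x_n^*}{E}{}$ for the \emph{rearranged} sequence, it cannot be applied directly to the given $y_n$ (for which $(|x|-y_n)^*\neq x^*-y_n^*$); the decreasing flattening construction is precisely what converts the $LLUKM$ conclusion into the genuine smallness $\norm{x^*\chi_{(0,\delta)}}{E}{}\to 0$, and the extraction step via the set $\{x^*\ge 2x^{**}(c_n)\}$ is the delicate point. The standing hypothesis $x^*(t)\phi(t)\to 0$ enters the argument in exactly one place, through $\phi(0^+)=0$, which is what makes the finite-measure middle band order continuous.
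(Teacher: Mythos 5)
Your proof is correct, and it reaches the lemma by a genuinely different route. The paper argues by contradiction: it invokes Lemma 2.6 of \cite{CieKolPan} and Proposition 3.2 of \cite{BS} to convert the failure of order continuity into $\delta\leq\norm{x^*\chi_{A_n}}{E}{}$ with $A_n\downarrow\emptyset$, removes the tail with the same truncations $x^*\chi_{[0,n)}$ you use, and then flattens the top at the level $x^*(t_n)$ with $t_n=\mu(A_n)$, so that the $LLUKM$ property leaves $x^*(t_n)\phi(t_n)\geq\delta/4$, contradicting the hypothesis directly. You instead give a direct proof: the same two $LLUKM$ inputs (tail truncation, top flattening) yield the quantitative estimates $\norm{x^*\chi_{(\tau,\alpha)}}{E}{}\rightarrow 0$ and $\norm{x^*\chi_{(0,\delta)}}{E}{}\rightarrow 0$, and you assemble order continuity by hand via the three-band Egorov decomposition rather than citing \cite{CieKolPan}. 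Two features are worth noting. Flattening at the mean level $x^{**}(c_n)$ makes $x_n\prec x$ automatic, with equality of integrals beyond $c_n$; and since your argument uses the hypothesis only through $\phi(0^+)=0$ (the unbounded case needs nothing more), you actually prove the formally stronger statement that an $LLUKM$ point is a point of order continuity whenever $\phi(0^+)=0$, the full condition $x^*(t)\phi(t)\rightarrow 0$ being equivalent to $\phi(0^+)=0$ when $x^*$ is bounded. Two small technical debts should be settled: a point $t_n$ with $x^*(t_n)=2x^{**}(c_n)$ need not exist if $x^*$ jumps across that value, so take $t_n=\sup\{t:x^*(t)\geq 2x^{**}(c_n)\}$, which is positive because $x^*(0^+)=\infty$, and the estimate $x^*-x^{**}(c_n)\geq x^*/2$ on $(0,t_n)$ survives; and $\mu(M)$ can be infinite when $x^*(\tau)=0$ and $\{x=0\}$ has infinite measure, so replace $M$ by $M\cap\supp x$, whose measure is at most $\tau$ since $d_x(0)\leq\tau$, after which Egorov applies as before. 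Neither repair affects the structure of your argument.
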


\begin{proof}
   	 Let us assume for a contrary that $x$ is not a point of order continuity in $E$. Then, by Lemma 2.6 \cite{CieKolPan} and Proposition 3.2 \cite{BS} there exist $(A_n)\subset{I}$ a decreasing sequence of measurable sets and $\delta>0$ such that $A_n\rightarrow\emptyset$ and
   	 \begin{equation}\label{equ:1:lem:LLUKM=>OC}
   	 \delta\leq\norm{x^*\chi_{A_n}}{E}{}
   	 \end{equation} 
   	 for all $n\in\mathbb{N}$. Let $\epsilon\in(0,\delta)$. We claim that there exists $K\in\mathbb{N}$ such that for every $k\geq{K}$,
   	\begin{equation*}
   	\norm{x^*\chi_{[k,\infty)}}{E}{}<\frac{\epsilon}{2}.
   	\end{equation*}
   	Indeed, taking $x_n=x^*\chi_{[0,n)}$ for any $n\in\mathbb{N}$ we have $x_n=x_n^*\uparrow{x^*}$ and also $\sup_{n\in\mathbb{N}}\norm{x_n^*}{E}{}\leq\norm{x^*}{E}{}<\infty$. Hence, by the Fatou property and by symmetry of $E$, it follows that $\norm{x_n}{E}{}\rightarrow\norm{x}{E}{}$. Consequently, according to assumption that $x$ is an $LLUKM$ point, in view of $x_n\prec{x}$ we obtain our claim. Moreover, it is easy to notice that $x^*\chi_{A_n\cap[0,k)}\prec{x^*}\chi_{[0,\min\{\mu(A_n),k\})}$ for any $k,n\in\mathbb{N}$, whence by symmetry and by the triangle inequality of the norm in $E$ we conclude
	\begin{align*}
	\norm{x^*\chi_{A_n}}{E}{}&\leq\norm{x^*\chi_{A_n\cap[0,k)}}{E}{}+\norm{x^*\chi_{A_n\cap[k,\infty)}}{E}{}\\
	&\leq\norm{x^*\chi_{[0,\min\{\mu(A_n),k\})}}{E}{}+\norm{x^*\chi_{A_n\cap[k,\infty)}}{E}{}	
	\end{align*}
	for any $k,n\in\mathbb{N}$. Hence, since $\mu(A_n)<K$ for sufficiently large $n\in\mathbb{N}$, passing to subsequence and relabelling if necessary, by the claim and by condition \eqref{equ:1:lem:LLUKM=>OC} we get 
	\begin{equation*}
	\delta\leq\norm{x^*\chi_{A_n}}{E}{}\leq\norm{x^*\chi_{[0,\mu(A_n))}}{E}{}+\norm{x^*\chi_{A_n\cap[K,\infty)}}{E}{}\leq\norm{x^*\chi_{[0,\mu(A_n))}}{E}{}+\frac{\epsilon}{2}
	\end{equation*}
	for any $n\in\mathbb{N}$. Therefore, for any $n\in\mathbb{N}$ we have
	\begin{equation}\label{equ:2:lem:LLUKM=>OC}
	\frac{\delta}{2}\leq\norm{x^*\chi_{[0,\mu(A_n))}}{E}{}.
	\end{equation}
	Define $t_n=\mu(A_n)$ and $z_n=x^*(t_n)\chi_{[0,t_n)}+x^*\chi_{[t_n,\infty)}$ for all $n\in\mathbb{N}$. Clearly, $z_n=z_n^*\leq{x^*}$ for every $n\in\mathbb{N}$ and $z_n^*\uparrow{x^*}$ a.e. on $I$. In consequence, since $\sup_{n\in\mathbb{N}}\norm{z_n^*}{E}{}\leq\norm{x^*}{E}{}$, by the Fatou property and by symmetry of $E$ this yields $\norm{z_n}{E}{}\rightarrow\norm{x}{E}{}$. Hence, since $z_n\prec{x}$ for any $n\in\mathbb{N}$ and by assumption that $x$ is an $LLUKM$ point there exists $N\in\mathbb{N}$ such that for any $n\geq{N}$,  
	\begin{equation*}
	\norm{(x^*-x^*(t_n))\chi_{[0,t_n)}}{E}{}<\frac{\epsilon}{4}.
	\end{equation*}
	So, by condition \eqref{equ:2:lem:LLUKM=>OC} and by the triangle inequality of the norm in $E$ we obtain
	\begin{align*}
	\frac{\delta}{2}&\leq\norm{x^*\chi_{[0,t_n)}}{E}{}\leq\norm{(x^*-x^*(t_n))\chi_{[0,t_n)}}{E}{}+\norm{x^*(t_n)\chi_{[0,t_n)}}{E}{}\\
	&\leq\frac{\epsilon}{4}+x^*(t_n)\phi(t_n)
	\end{align*}
	for all $n\geq{N}$. Consequently, for any $n\geq{N}$ we have
	\begin{equation}\label{equ:3:lem:LLUKM=>OC}
	x^*(t_n)\phi(t_n)\geq\delta/4,
	\end{equation}
	whence by assumption $x^*(t)\phi(t)\rightarrow{0}$ as $t\rightarrow{0^+}$ we get a contradiction, which ends the proof.
\end{proof}

Now, we answer the crucial question whether the condition $\phi(t)x^*(t)\rightarrow{0}$ as $t\rightarrow{0^+}$ in Lemma \ref{lem:LLUKM=>OC} is necessary and whether it can be avoided. Namely, in the following example we provide a function, in the Lorentz space $\Lambda_{1,\psi'}\cap{L^\infty}$, that is an $LLUKM$ point and it is not a point of order continuity.   

\begin{example}
	Let $\psi$ be a strictly concave function such that $\psi(0^+)=0$ and $\psi(\infty)=\infty$. Consider $E=\Lambda_{1,\psi'}\cap{L^\infty}$ on $I=[0,1]$, equipped with an equivalent norm given by 
	\begin{equation*}
	\norm{x}{E}{}=\norm{x}{\Lambda_{1,\psi'}}{}+\norm{x}{L^\infty}{}
	\end{equation*}
	for any $x\in{E}$. Assuming that $\phi$ is the fundamental function of $E$ we easily observe $\phi(t)=\psi(t)+1$ for any $t>0$. Define $x(t)=(1-t)\chi_{[0,1]}(t)$ for any $t\in{I}$. First, we prove that the function $x$ is not a point of order continuity in $E$. Indeed, taking $x_n=x\chi_{(0,1/n)}$ for any $n\in\mathbb{N}$ it is easy to see that $x_n\rightarrow{0}$ a.e. and $x_n\leq{x}$ for any $n\in\mathbb{N}$. Next, since $\lim_{t\rightarrow{0^+}}\phi(t)x^*(t)={1}$, by Proposition 5.9 in \cite{BS} we have
	$$\norm{x_n}{E}{}\geq\norm{x_n}{M_{\phi}}{}\geq\sup_{t\in(0,1/n]}\{(1-t)(1+\psi(t))\}\geq{1}$$
	for all $n\in\mathbb{N}$. We claim that $x$ is an LLUKM point in $E$. Since $\psi(\infty)=\infty$ and $\psi(0^+)=0$, by Proposition 1.4 in \cite{KMGam} it follows that the Lorentz space $\Lambda_{1,\psi'}$ is order continuous. Hence, since $\psi$ is strictly concave, by Theorem 2.11 in \cite{ChDSS} we obtain that $\Lambda_{1,\psi'}$ is strictly $K$-monotone and also $ULUKM$. Consequently, by Theorem \ref{thm:complete:charac} we conclude $\Lambda_{1,\psi'}$ is $LLUKM$. Hence, the Lorentz space $E$ endowed with the given norm is strictly $K$-monotone, whence $x$ is an $LKM$ point in $E$. Assume that $(y_n)\subset{E}$, $y_n\prec{x}$ for any $n\in\mathbb{N}$ and $\norm{y_n}{E}{}\rightarrow\norm{x}{E}{}$. Then, since $x$ is an $LKM$ point and $x^*(\infty)=0$, by Theorem 3.2 in \cite{Cies-SKM-KOC} it follows that $y_n^*\rightarrow{x^*}$ globally in measure. Therefore,  by property 2.11 in \cite{KPS} we get $y_n^*(t)\rightarrow{x^*(t)}$ for all $t\in[0,1]$. In consequence, by monotonicity of the decreasing rearrangement $y_n^*$ and by continuity of $x^*$ on $I$, in view of Dini's theorem for monotone functions (see \cite{Poly}) it follows that $y_n^*$ converges to $x^*$ uniformly on $I$, i.e. 
	\begin{equation}\label{equ:inf:conv}
	\norm{x^*-y_n^*}{L^\infty}{}\rightarrow{0}.
	\end{equation}
	So, it is clear that $$\norm{y_n}{L^\infty}{}=y_n^*(0)\rightarrow{}x^*(0)=\norm{x}{L^\infty}{}.$$
	Furthermore, by assumption $\norm{y_n}{E}{}\rightarrow\norm{x}{E}{}$ and by definition of the norm in $E$ we get $\norm{y_n}{\Lambda_{1,\psi'}}{}\rightarrow\norm{x}{\Lambda_{1,\psi'}}{}$. Thus, since $y_n\prec{x}$ for all $n\in\mathbb{N}$ and by the fact that $\Lambda_{1,\psi'}$ is $LLUKM$ we have
	\begin{equation*}
	\norm{x^*-y_n^*}{\Lambda_{1,\psi'}}{}\rightarrow{0}.
	\end{equation*} 
	and consequently, in view of condition \eqref{equ:inf:conv} and by definition of the norm in $E$ we are done.
\end{example}

\begin{proposition}\label{prop:LLUKM=>OC}
	Let $E$ be a symmetric space. If ${E}$ is $LLUKM$, then $E$ is order continuous.
\end{proposition}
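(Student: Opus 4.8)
The plan is to funnel everything through Lemmas \ref{lem:1:llukm} and \ref{lem:LLUKM=>OC}. Fix an arbitrary $x\in E^{+}$; by rearrangement invariance we may assume $x=x^{*}$. Since $E$ is $LLUKM$, $x$ is an $LLUKM$ point, so Lemma \ref{lem:1:llukm} already delivers $x^{*}(\infty)=0$. By Lemma \ref{lem:LLUKM=>OC} it then suffices to establish the single boundary condition $x^{*}(t)\phi(t)\to0$ as $t\to0^{+}$, where $\phi=\phi_{E}$; once this holds for every $x$, Lemma \ref{lem:LLUKM=>OC} makes every $x$ a point of order continuity, i.e. $E\in(OC)$. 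Thus the entire task is to extract $x^{*}(t)\phi(t)\to0$ from the \emph{global} hypothesis. This is exactly the datum the preceding Example shows cannot follow from the pointwise $LLUKM$ property alone, so the argument must use globality in an essential way.

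First I would record a preliminary reduction: global $LLUKM$ forces $\phi(0^{+})=0$. Indeed, if $\phi(0^{+})=b>0$, fix $a\in(0,\alpha)$ and set $v=\chi_{(0,a)}$, $v_{n}=\chi_{(1/n,a)}$. Then $v_{n}^{*}=\chi_{(0,a-1/n)}\leq v^{*}$, so $v_{n}\prec v$, while $\norm{v_{n}}{E}{}=\phi(a-1/n)\to\phi(a)=\norm{v}{E}{}$ by continuity of the quasiconcave $\phi$ on $(0,\alpha)$. As $v$ must be an $LLUKM$ point, $\norm{v^{*}-v_{n}^{*}}{E}{}\to0$; but $v^{*}-v_{n}^{*}=\chi_{[a-1/n,a)}$ has norm $\phi(1/n)\to b>0$, a contradiction. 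Hence $\phi(0^{+})=0$.

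Now suppose, towards a contradiction, that $\limsup_{t\to0^{+}}x^{*}(t)\phi(t)=c>0$, and pick $t_{k}\downarrow0$ with $x^{*}(t_{k})\phi(t_{k})\geq c/2$. Since $\phi(0^{+})=0$, the function $x^{*}$ must be unbounded near $0$ (otherwise $x^{*}(t)\phi(t)\leq x^{*}(0^{+})\phi(t)\to0$), so $x^{*}(t_{k})\to\infty$. Consider the truncations
\[
z_{n}=\min\{x^{*},x^{*}(t_{n})\}=x^{*}(t_{n})\chi_{(0,t_{n})}+x^{*}\chi_{[t_{n},\infty)}.
\]
Each $z_{n}\leq x^{*}$ is decreasing, hence $z_{n}\prec x$ and $z_{n}\in E$, and $z_{n}\uparrow x^{*}$ a.e. because the truncation level $x^{*}(t_{n})\uparrow\infty$; by the Fatou property $\norm{z_{n}}{E}{}\to\norm{x}{E}{}$. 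So $(z_{n})$ is an admissible test sequence for the $LLUKM$ property of $x$.

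The key step — and the main obstacle — is the uniform lower bound $\norm{x^{*}-z_{n}^{*}}{E}{}\not\to0$, which is delicate because the defect $x^{*}-z_{n}=(x^{*}-x^{*}(t_{n}))\chi_{(0,t_{n})}$ lives on the shrinking interval $(0,t_{n})$. Here I would invoke the norm-one embeddings $E\hookrightarrow M_{\phi}\hookrightarrow M_{\phi}^{(*)}$, which give $\norm{f}{E}{}\geq\norm{f}{M_{\phi}^{(*)}}{}=\sup_{t>0}f^{*}(t)\phi(t)$. Since the defect is already nonnegative and decreasing, one has $(x^{*}-z_{n})^{*}(t)=(x^{*}(t)-x^{*}(t_{n}))\chi_{(0,t_{n})}(t)$, so evaluating at $t=t_{m}$ with $m>n$ yields
\[
\norm{x^{*}-z_{n}}{E}{}\geq(x^{*}(t_{m})-x^{*}(t_{n}))\phi(t_{m})=x^{*}(t_{m})\phi(t_{m})-x^{*}(t_{n})\phi(t_{m}).
\]
For each fixed $n$ the first term is $\geq c/2$, while the second tends to $0$ as $m\to\infty$, because $x^{*}(t_{n})$ is a fixed finite number and $\phi(t_{m})\to\phi(0^{+})=0$; choosing $m=m(n)$ with $x^{*}(t_{n})\phi(t_{m})\leq c/4$ gives $\norm{x^{*}-z_{n}}{E}{}\geq c/4$ for all $n$. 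This contradicts $x$ being an $LLUKM$ point, so $x^{*}(t)\phi(t)\to0$ as $t\to0^{+}$, and Lemma \ref{lem:LLUKM=>OC} completes the proof. The genuinely subtle point is precisely this lower bound: a naive perturbation (for instance flattening to the average on $(0,t_{n})$) need not keep the defect bounded away from $0$, and it is the decay $\phi(t_{m})\to0$ secured by the preliminary reduction that makes the Marcinkiewicz estimate succeed.
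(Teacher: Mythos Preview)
Your proof is correct. Both your argument and the paper's pivot on the same two ingredients --- the characteristic-function test that forces $\phi(0^{+})=0$ and the truncations $z_{n}=x^{*}(t_{n})\chi_{[0,t_{n})}+x^{*}\chi_{[t_{n},\infty)}$ combined with the Marcinkiewicz embedding --- but they are run in opposite directions. The paper assumes $E$ is not $OC$, re-enters the proof of Lemma~\ref{lem:LLUKM=>OC} to extract the lower bound $x^{*}(t_{n})\phi(t_{n})\geq\delta/4$, infers $\phi(0^{+})>0$, and then the characteristic-function test contradicts $LLUKM$. You instead establish $\phi(0^{+})=0$ up front from the characteristic-function test, use it together with the $z_{n}$ truncation and the $M_{\phi}^{(*)}$ lower bound to verify the hypothesis $x^{*}(t)\phi(t)\to0$ of Lemma~\ref{lem:LLUKM=>OC} for every $x$, and then invoke the lemma as a black box. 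Your ordering is slightly longer but logically cleaner: it avoids the phrase ``proceeding similarly as in the proof of Lemma~\ref{lem:LLUKM=>OC}'' and, more importantly, handles the case $x\notin L^{\infty}$ explicitly (your two-index Marcinkiewicz estimate at $t_{m}$ with $m>n$ is exactly what is needed there), whereas the paper's intermediate inequality $\delta/4\leq\norm{x}{L^{\infty}}{}\phi(0^{+})$ tacitly presumes $x$ bounded.
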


\begin{proof}
	For a contrary, suppose that there exists $x\in{E}$ that is not a point of order continuity. Let $\phi$ be the fundamental function of $E$. By symmetry of $E$ and by Proposition 5.9 in  \cite{BS} we have for any $t>0$ and $z\in{E}$,
	\begin{equation}\label{equ:1:prop:LLUKM=>OC}
	z^*(t)\phi(t)\leq\norm{z}{M_\phi}{}\leq\norm{z}{E}{}.
	\end{equation}
	Next, proceeding similarly as in the proof of  Lemma \ref{lem:LLUKM=>OC}, in view of conditions \eqref{equ:3:lem:LLUKM=>OC} and \eqref{equ:1:prop:LLUKM=>OC} it is easy to see that
	\begin{equation*}
	 \frac{\delta}{4}\leq{}\norm{x}{L^\infty}{}\phi(0^+)\leq\norm{x}{E}{}.
	\end{equation*}
	Then, since $\phi(0^+)>0$, applying condition \eqref{equ:1:prop:LLUKM=>OC} for any $z\in{E}$ we observe
	\begin{equation}\label{equ:2:prop:LLUKM=>OC}
	\norm{z}{L^\infty}{}\phi(0^+)\leq\norm{z}{E}{}.
    \end{equation}
	Define $y=\chi_{[0,1)}$ and $y_n=\chi_{[0,1-1/n)}$ for any $n\in\mathbb{N}$. Obviously, by the Fatou property we get $\norm{y_n}{E}{}\rightarrow\norm{y}{E}{}$. Thus, since $y_n\prec{y}$ for all $n\in\mathbb{N}$, in view of assumption that $E$ is $LLUKM$ we get $$\norm{\chi_{[0,1/n)}}{E}{}=\norm{y^*-y_n^*}{E}{}\rightarrow{0}.$$
	Hence, by condition \eqref{equ:2:prop:LLUKM=>OC} we obtain a contradiction and complete the proof.
\end{proof}

\begin{theorem}\label{thm:1:LLUKM}
	Let $E$ be a symmetric space and $\phi$ be the fundamental function of $E$. If $x\in{E}$ is an $LLUKM$ point and $\lim_{t\rightarrow{0^+}}x^*(t)\phi(t)=0$, then $\abs{x}{}{}$ is an $LLUM$ point.
\end{theorem}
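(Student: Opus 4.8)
The plan is to reduce the assertion to a convergence-in-measure statement and then close the argument with order continuity. Since being an $LLUKM$ point and the condition $\lim_{t\to0^+}x^*(t)\phi(t)=0$ depend only on the decreasing rearrangement $x^*$, and since $\abs{x}{}{}^{*}=x^*$, I would first replace $x$ by $\abs{x}{}{}$ and assume without loss of generality that $x\geq 0$, so that the goal becomes: $x$ is an $LLUM$ point. Two facts already available will serve as the engine. By Lemma \ref{lem:1:llukm} we have $x^*(\infty)=0$, and by Lemma \ref{lem:LLUKM=>OC} (applicable precisely because $x$ is an $LLUKM$ point and $x^*(t)\phi(t)\to0$ as $t\to0^+$) the element $x$ is a point of order continuity.

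Now fix $(y_n)\subset E^{+}$ with $y_n\leq x$ and $\norm{y_n}{E}{}\to\norm{x}{E}{}$. First I would note that $0\leq y_n\leq x$ gives $y_n^*\leq x^*$, hence $y_n\prec x$. Combining $y_n\prec x$ with $\norm{y_n}{E}{}\to\norm{x}{E}{}$ and the hypothesis that $x$ is an $LLUKM$ point yields $\norm{x^*-y_n^*}{E}{}\to0$. Using the embedding $E\hookrightarrow M_{\phi}$ with norm $1$ together with Proposition 5.9 in \cite{BS}, i.e. $(x^*-y_n^*)^{*}(t)\phi(t)\leq\norm{x^*-y_n^*}{E}{}$ for every $t>0$, this forces $(x^*-y_n^*)^{*}(t)\to0$ for each $t$, that is $y_n^*\to x^*$ globally in measure. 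In particular $d_{y_n}(\lambda)\to d_{x}(\lambda)$ at every continuity point $\lambda$ of $d_x$.

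The crux is to upgrade this distributional convergence of the $y_n$ to genuine convergence in measure $y_n\to x$, and here the pointwise domination $y_n\leq x$ together with $x^*(\infty)=0$ is indispensable. For a fixed level $\lambda>0$ that is a continuity point of $d_x$, the inclusion $\{y_n>\lambda\}\subseteq\{x>\lambda\}$ holds and $\mu\{x>\lambda\}=d_x(\lambda)<\infty$ since $x^*(\infty)=0$; as $\mu\{y_n>\lambda\}\to\mu\{x>\lambda\}$, both finite, we get $\mu\big(\{x>\lambda\}\setminus\{y_n>\lambda\}\big)\to0$, i.e. $\mu\{x>\lambda\geq y_n\}\to0$. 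To conclude $\mu\{x-y_n>\delta\}\to0$ for a given $\delta>0$, I would slice along a fine finite grid of continuity points $\delta=s_0<s_1<\dots<s_N=M$ with all gaps $\leq\delta$: on the band $\{s_j<x\leq s_{j+1}\}$ the inequality $x-y_n>\delta$ gives $y_n<s_{j+1}-\delta\leq s_j<x$, so that band is contained in the set $\{x>s_{j+1}-\delta\geq y_n\}$, whose measure tends to $0$; the tail $\{x>M\}$ has measure $d_x(M)$, which is small for large $M$ because $x^*(\infty)=0$. Summing the finitely many bands gives $\limsup_n\mu\{x-y_n>\delta\}\leq d_x(M)$, and letting $M\to\infty$ yields $y_n\to x$ globally in measure. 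I expect this level-by-level estimate to be the main obstacle, since the rearrangement convergence by itself does not localize and one must use the domination to pin the mass of $y_n$ to that of $x$ on each level set.

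Finally, I would conclude using order continuity of $x$. Since $0\leq x-y_n\leq x$ and $x-y_n\to0$ in measure, a standard subsequence argument applies: every subsequence of $(x-y_n)$ admits a further subsequence converging to $0$ almost everywhere while remaining dominated by $x$, whence the order continuity of $x$ (Lemma \ref{lem:LLUKM=>OC}) forces the $E$-norm of that sub-subsequence to $0$; as every subsequence contains such a sub-subsequence, we obtain $\norm{x-y_n}{E}{}\to0$. This shows that $x=\abs{x}{}{}$ is an $LLUM$ point, completing the proof.
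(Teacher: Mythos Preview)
Your proof is correct and follows the same architecture as the paper's: obtain $\norm{x^*-y_n^*}{E}{}\to0$ from the $LLUKM$ hypothesis, use $x^*(\infty)=0$ (Lemma~\ref{lem:1:llukm}) together with the domination $0\leq y_n\leq x$ to convert this into convergence $y_n\to x$ in measure, and then close with order continuity of $x$ (Lemma~\ref{lem:LLUKM=>OC}). The only difference is that the paper delegates the last two steps to external references (Lemma~2.7 in \cite{CieKolPan} for the passage from rearrangement convergence plus domination to convergence in measure, and Proposition~2.4 in \cite{CzeKam} for the order-continuity endgame), whereas you reprove both by hand with the level-set slicing and the sub-subsequence argument. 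Your approach is more self-contained; the paper's is shorter.

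One small imprecision in your slicing: you assert that the band $\{s_j<x\leq s_{j+1}\}\cap\{x-y_n>\delta\}$ lies in $\{x>s_{j+1}-\delta\geq y_n\}$ and that this set has vanishing measure, but $s_{j+1}-\delta$ need not be a continuity point of $d_x$, so the convergence $\mu\{x>\lambda\geq y_n\}\to0$ that you established only at continuity points does not apply directly. The fix is immediate: since your grid satisfies $s_{j+1}-s_j\leq\delta$, you have $y_n<s_{j+1}-\delta\leq s_j$ on that band, so the same band already sits inside $\{x>s_j\geq y_n\}$, and $s_j$ \emph{is} a continuity point by your choice of grid. (You should also allow a slight perturbation of the endpoints $s_0=\delta$ and $s_N=M$ to continuity points, which costs nothing since $d_x$ has at most countably many discontinuities.)
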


\begin{proof}
	Let $(x_n)\subset{E^+}$ and $0\leq{x_n}\leq{\abs{x}{}{}}$, $\norm{x_n}{E}{}\rightarrow\norm{x}{E}{}$. Then, by property of the maximal function we obtain $x_n\prec{x}$. Hence, by assumption that $x$ is an $LLUKM$ point we have 
	\begin{equation}\label{equ:1:thm:1:LLUKM}
	\norm{x_n^*-x^*}{E}{}\rightarrow{0}.
	\end{equation}
	By Lemma \ref{lem:1:llukm} we get $x^*(\infty)=0$, whence by Lemma 2.7 in \cite{CieKolPan} and by assumption that $0\leq{x_n}\leq\abs{x}{}{}$ for all $n\in\mathbb{N}$ it follows that $x_n$ converges to $|x|$ in measure. Moreover, since $\lim_{t\rightarrow{0^+}}x^*(t)\phi(t)=0$, by Lemma \ref{lem:LLUKM=>OC} this yields that $x$ is a point of order continuity. Consequently, by condition \eqref{equ:1:thm:1:LLUKM} and by Proposition 2.4 in \cite{CzeKam} we conclude $$\norm{x_n-\abs{x}{}{}}{E}{}\rightarrow{0}.$$
\end{proof}

\begin{theorem}\label{thm:LLUKM<=>LKM&OC}
Let $E$ be a symmetric space on $I=[0,1)$, with $\phi$ the fundamental function of $E$. A point $x\in{E}$ is an $LLUKM$ point and $\lim_{t\rightarrow{0^+}}x^*(t)\phi(t)=0$ if and only if $x$ is an $LKM$ point and a point of order continuity.
\end{theorem}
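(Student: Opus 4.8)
The statement is an equivalence, and the plan is to prove the two implications separately; throughout I would use that the Fatou property forces $E\in(KM)$ and that on $I=[0,1)$ one always has $x^{*}(\infty)=0$.

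For the implication ($\Rightarrow$), suppose $x$ is an $LLUKM$ point with $\lim_{t\to0^{+}}x^{*}(t)\phi(t)=0$. Order continuity of $x$ is then immediate from Lemma~\ref{lem:LLUKM=>OC}, so only the $LKM$ property needs work. Given $y\in E$ with $y^{*}\neq x^{*}$ and $y\prec x$, I would note $\norm{y}{E}{}\le\norm{x}{E}{}$ by $K$-monotonicity, and suppose toward a contradiction that $\norm{y}{E}{}=\norm{x}{E}{}$. Feeding the constant sequence $x_{n}:=y$ into the definition of an $LLUKM$ point (here $x_{n}\prec x$ and $\norm{x_{n}}{E}{}=\norm{x}{E}{}$) forces $\norm{x^{*}-y^{*}}{E}{}=0$, i.e. $x^{*}=y^{*}$ a.e., a contradiction. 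Hence $\norm{y}{E}{}<\norm{x}{E}{}$.

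For the converse ($\Leftarrow$), suppose $x$ (which I may take nonzero) is an $LKM$ point and a point of order continuity. First I would extract two consequences. Since $x^{*}(t)\phi(t)=\norm{x^{*}(t)\chi_{(0,t)}}{E}{}\le\norm{x^{*}\chi_{(0,t)}}{E}{}\to0$ as $t\to0^{+}$, the rearrangement condition holds automatically; and $\phi(0^{+})=0$, for otherwise \eqref{equ:1:prop:LLUKM=>OC} would yield $\norm{z}{L^\infty}{}\phi(0^{+})\le\norm{z}{E}{}$ for all $z$, and sets $A_{n}\downarrow\emptyset$ on which $\abs{x}{}{}$ stays large would contradict order continuity of $x$. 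Observe also $\norm{\chi_{A}}{E}{}=\phi(\mu(A))$, so $\norm{\chi_{A_{n}}}{E}{}\to0$ whenever $\mu(A_{n})\to0$. Now let $(x_{n})\subset E$ with $x_{n}\prec x$ and $\norm{x_{n}}{E}{}\to\norm{x}{E}{}$. By a subsequence-of-subsequence reduction it suffices, along any subsequence, to pass to a further subsequence with $x_{n}^{*}\to x^{*}$ a.e.---which exists because, $x$ being an $LKM$ point with $x^{*}(\infty)=0$, Theorem~3.2 in \cite{Cies-SKM-KOC} gives $x_{n}^{*}\to x^{*}$ globally in measure---and to prove $\norm{x^{*}-x_{n}^{*}}{E}{}\to0$ along it. Write $a_{n}:=x_{n}^{*}$ and $b:=x^{*}$.

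I would then split at a height $h$. The low part is handled by order continuity of the fundamental function: $g_{n}:=a_{n}\wedge h-b\wedge h\to0$ a.e. with $\abs{g_{n}}{}{}\le h$, so from $\norm{g_{n}}{E}{}\le h\,\phi(\mu\{\abs{g_{n}}{}{}>\delta\})+\delta\,\phi(1)$ and $\phi(0^{+})=0$ one gets $\norm{a_{n}\wedge h-b\wedge h}{E}{}\to0$ for each fixed $h$. The high part is controlled uniformly in $n$: since $a_{n}\le a_{n}^{**}\le x^{**}$, the support of $(a_{n}-h)^{+}$ lies in $(0,s_{h})$ with $s_{h}:=\mu\{x^{**}>h\}\to0$, and since $\int_{0}^{t}a_{n}\le\int_{0}^{t}x^{*}$ one checks $a_{n}\chi_{(0,s_{h})}\prec x^{*}\chi_{(0,s_{h})}$, so by $K$-monotonicity and order continuity of $x$,
\begin{equation*}
\sup_{n}\norm{(a_{n}-h)^{+}}{E}{}\le\norm{x^{*}\chi_{(0,s_{h})}}{E}{}\longrightarrow 0\ \ (h\to\infty),
\end{equation*}
and the same bound dominates $\norm{(b-h)^{+}}{E}{}$. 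Combining through $\norm{a_{n}-b}{E}{}\le\norm{(a_{n}-h)^{+}}{E}{}+\norm{a_{n}\wedge h-b\wedge h}{E}{}+\norm{(b-h)^{+}}{E}{}$ and letting $n\to\infty$ then $h\to\infty$ yields $\norm{x^{*}-x_{n}^{*}}{E}{}\to0$, so $x$ is an $LLUKM$ point. The delicate step is precisely this converse: order continuity readily controls the portion of $x_{n}^{*}$ below a fixed height, but near $0$ the rearrangements $x_{n}^{*}$ may blow up and the naive domination by $x^{**}$ fails, since $x^{**}\notin E$ in general. The device that saves the argument is to dominate the high part, after restricting to the shrinking support $(0,s_{h})$, by $x^{*}\chi_{(0,s_{h})}$ in the order $\prec$ and then invoke $K$-monotonicity; this is exactly where $x\in(LKM)$ (to secure convergence in measure) and the point-of-order-continuity hypothesis (to secure both $\phi(0^{+})=0$ and $\norm{x^{*}\chi_{(0,s_{h})}}{E}{}\to0$) are simultaneously essential.
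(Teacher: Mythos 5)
Your proposal is correct, and while the necessity half matches the paper (you prove the content of Remark 3.1 of \cite{Cies-geom} directly via the constant sequence $x_n=y$ together with $K$-monotonicity from the Fatou property, where the paper simply cites it, and both use Lemma \ref{lem:LLUKM=>OC}), your sufficiency argument takes a genuinely different route. Both proofs start the same way: the $LKM$ hypothesis with $x^*(\infty)=0$ and Theorem 3.2 of \cite{Cies-SKM-KOC} yield $x_n^*\to x^*$ globally in measure, and both ultimately rely on order continuity of $x^*$ and on $\chi_I\in E$. But the paper then splits \emph{vertically} into $(x^*-x_n^*)^+$ and $(x_n^*-x^*)^+$: the first term is killed by the monotone majorants $\sup_{k\geq n}(x^*-x_k^*)^+\downarrow 0$ and order continuity, while the second is handled through Hardy's lemma with $y=\chi_{M_n^k}$ and Corollary 4.7 of \cite{BS}, where $M_n^k$ is the level set of $((x_n^*-x^*)^+)^*$ at height $1/k$, the remainder being absorbed by $\norm{\chi_I}{E}{}/k$. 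You instead truncate \emph{horizontally} at height $h$: the bounded part $x_n^*\wedge h-x^*\wedge h$ is controlled by convergence in measure together with $\phi(0^+)=0$ (which you correctly extract from order continuity of $x\neq 0$, and which the paper never needs explicitly), and the unbounded part $(x_n^*-h)^+$ is confined to the shrinking interval $(0,s_h)$ via $x_n^*\leq x_n^{**}\leq x^{**}$, compared with $x^*\chi_{(0,s_h)}$ in the relation $\prec$, and then dominated in norm by $K$-monotonicity. Your route buys a more self-contained argument — it avoids Hardy's lemma, Corollary 4.7 of \cite{BS} and property 2.12 of \cite{KPS}, and needs no rearrangement of the difference — at the cost of the auxiliary facts $\phi(0^+)=0$ and $s_h\to 0$; conversely, the paper's vertical split is the one whose first half carries over verbatim to $I=[0,\infty)$ in Theorem \ref{thm:LLUKM<=>LKM&OC_2}, whereas your low-part estimate $h\,\phi(\mu\{\abs{g_n}{}{}>\delta\})+\delta\,\phi(1)$ is intrinsically tied to the finite interval, since it uses $\chi_I\in E$ exactly where the paper uses $\norm{\chi_I}{E}{}/k$. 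Two harmless quibbles: your subsequence-of-subsequences reduction is superfluous, because every estimate you make only uses convergence in measure, which already holds for the full sequence; and $\phi(1)$ should be read as $\norm{\chi_I}{E}{}$, finite since any symmetric space on $[0,1)$ contains $L^\infty$.
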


\begin{proof}
\textit{Necessity}. Immediately, by Remark 3.1 in \cite{Cies-geom} and by Lemma  \ref{lem:LLUKM=>OC} we complete the proof.\\
\textit{Sufficiency}. Let $(x_n)\subset{E}$, $x_n\prec{x}$ and $\norm{x_n}{E}{}\rightarrow\norm{x}{E}{}$. Since $x$ is a point of order continuity, it is easy to see that $\lim_{t\rightarrow{0^+}}x^*(t)\phi(t)=0$ and by Lemma 2.5 \cite{CieKolPan} it follows $x^*(\infty)=0$. Moreover, since $x$ is an $LKM$ point, by Theorem 3.2 in \cite{Cies-SKM-KOC} we obtain $x_n^*$ converges to $x^*$ in measure. Hence, by property 2.11 in \cite{KPS}, we get
\begin{equation}\label{equ:conv:pointwise}
(x_n^*-x^*)^+\rightarrow{0}\quad\textnormal{and}\quad(x^*-x_n^*)^+\rightarrow{0}
\end{equation}
a.e.  and in measure on $I$. Notice that, for any $n\in\mathbb{N}$ we have
\begin{equation}\label{equ:ineq+}
(x_n^*-x^*)^+\leq{x_n^*}\quad\textnormal{and}\quad(x^*-x_n^*)^+\leq{\sup_{k\geq{n}}(x^*-x_k^*)^+}\leq{x^*}
\end{equation}
a.e. on $I$. In consequence, since $\sup_{k\geq{n}}(x^*-x_n^*)^+\downarrow{0}$ a.e. and $x$ is a point of order continuity, by Lemma 2.6 in \cite{CieKolPan} we obtain
\begin{equation*}
\norm{(x^*-x_n^*)^+}{E}{}\rightarrow{0}.
\end{equation*}
Thus, by the triangle inequality of the norm in $E$, to complete the proof it is enough to show the following condition
\begin{equation}\label{equ:second:conv}
\norm{(x_n^*-x^*)^+}{E}{}\rightarrow{0}.
\end{equation}
First, by Lemma 3.1 \cite{Cies-JAT} it is clear that $x^{**}(\infty)=0$. Therefore, since ${x_n^*}\prec{x^*}$ for all $n\in\mathbb{N}$, by condition \eqref{equ:ineq+} it is easy to observe that for any $n\in\mathbb{N}$, 
\begin{equation}\label{equ:ineq:star}
((x_n^*-x^*)^+)^*\leq{x_n^*}\leq{x^{**}}\quad\textnormal{and}\quad(x_n^*-x^*)^+\prec{x^*,}
\end{equation}
whence, by condition \eqref{equ:conv:pointwise} and  by property 2.12 in \cite{KPS} we conclude
\begin{equation}\label{equ:conv:seq:star}
((x_n^*-x^*)^+)^*\rightarrow{0}
\end{equation}
pointwise and also in measure. Furthermore, by condition \eqref{equ:ineq:star} and by Hardy's lemma \cite{BS} for any $y\in{E}$ and $t>0$, $n\in\mathbb{N}$ we have
\begin{equation}\label{equ:hardy:ineq}
\int_{0}^{t}((x_n^*-x^*)^+)^*y^*\leq\int_{0}^{t}x^*y^*.
\end{equation}
Define for any $n, k\in\mathbb{N}$, $$M_n^k=\left\{t\in{I}:((x_n^*-x^*)^+)^*(t)>\frac{1}{k}\right\}.$$
Clearly, by condition \eqref{equ:conv:seq:star} for any $k\in\mathbb{N}$ we have $\mu(M_n^k)\rightarrow{0}$ as $n\rightarrow\infty$. Now, letting $y=\chi_{M_n^k}\in{E}$, by condition \eqref{equ:hardy:ineq} and by symmetry of $E$, in view of Corollary 4.7 in \cite{BS} we get 
\begin{equation*}
\norm{((x_n^*-x^*)^+)^*\chi_{[0,\mu(M_n^k)]}}{E}{}\leq\norm{x^*\chi_{[0,\mu(M_n^k)]}}{E}{}
\end{equation*}
for every $n,k\in\mathbb{N}$. Thus, since $x^*\chi_{[0,\mu(M_n^k)]}\leq{x^*}$ a.e. on $I$ for all $n,k\in\mathbb{N}$ and $x^*$ is a point of order continuity, it follows that for any $k\in\mathbb{N}$ and $\epsilon>0$ there exists $N\in\mathbb{N}$ such that for any $n\geq{N}$,
\begin{equation*}
\norm{((x_n^*-x^*)^+)^*\chi_{[0,\mu(M_n^k)]}}{E}{}\leq\frac{\epsilon}{2}.
\end{equation*}
Moreover, by construction of the set $M_n^k$, picking $k\in\mathbb{N}$ such that $\norm{\chi_I}{E}{}/k<\epsilon/2$ it is easy to see that
\begin{equation*}
\norm{((x_n^*-x^*)^+)^*\chi_{(\mu(M_n^k),1)}}{E}{}\leq\norm{\frac{1}{k}\chi_{(\mu(M_n^k),1)}}{E}{}\leq\frac{\epsilon}{2}
\end{equation*}
for all $n\in\mathbb{N}$. Finally, by the triangle inequality of the norm in $E$ we prove condition \eqref{equ:second:conv} and finish the proof.
\end{proof}

Now, we investigate a similar result as above for a symmetric space $E$ on $[0,\infty)$ under some additional assumptions of $E$.  

\begin{theorem}\label{thm:LLUKM<=>LKM&OC_2}
Let $E$ be a symmetric space on $I=[0,\infty)$ and let $\phi$ be the fundamental function of $E$ such that $\phi(t)/t\rightarrow{0}$ as $t\rightarrow\infty$ and let $x\in{E\cap{L^1}}$. A point $x$ is an $LLUKM$ point and $\lim_{t\rightarrow{0^+}}x^*(t)\phi(t)=0$ if and only if $x$ is an $LKM$ point and a point of order continuity.
\end{theorem}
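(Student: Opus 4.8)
\emph{Necessity} is the same as in Theorem~\ref{thm:LLUKM<=>LKM&OC} and needs neither extra hypothesis: an $LLUKM$ point is an $LKM$ point by Remark~3.1 in \cite{Cies-geom}, and it is a point of order continuity by Lemma~\ref{lem:LLUKM=>OC}, and neither statement depends on whether $\alpha=1$ or $\alpha=\infty$. The plan for \emph{sufficiency} is to run the proof of Theorem~\ref{thm:LLUKM<=>LKM&OC} almost verbatim, locating the single step at which the unbounded interval $[0,\infty)$ invalidates the bounded-case argument and repairing it with the two extra hypotheses $x\in L^1$ and $\phi(t)/t\rightarrow 0$. So I would fix $(x_n)\subset E$ with $x_n\prec x$ and $\norm{x_n}{E}{}\rightarrow\norm{x}{E}{}$ and aim at $\norm{x^*-x_n^*}{E}{}\rightarrow 0$. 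As before, order continuity of $x$ yields $\lim_{t\rightarrow 0^+}x^*(t)\phi(t)=0$, so that half of the claim comes for free, and also $x^*(\infty)=0$; since $x\in L^1$ one moreover has $x^{**}(t)\leq\norm{x}{L^1}{}/t\rightarrow 0$. Because $x$ is an $LKM$ point, Theorem~3.2 in \cite{Cies-SKM-KOC} gives $x_n^*\rightarrow x^*$ in measure, hence $(x_n^*-x^*)^+\rightarrow 0$ and $(x^*-x_n^*)^+\rightarrow 0$ both a.e. and in measure.

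The negative-part term is treated exactly as in Theorem~\ref{thm:LLUKM<=>LKM&OC}: from $(x^*-x_n^*)^+\leq\sup_{k\geq n}(x^*-x_k^*)^+\leq x^*$, with $\sup_{k\geq n}(x^*-x_k^*)^+\downarrow 0$ a.e., order continuity of $x$ forces $\norm{(x^*-x_n^*)^+}{E}{}\rightarrow 0$ via Lemma~2.6 in \cite{CieKolPan}. Everything therefore reduces to $\norm{(x_n^*-x^*)^+}{E}{}\rightarrow 0$. Writing $g_n=(x_n^*-x^*)^+$, the relation $x_n^*\prec x^*$ gives $g_n^*\leq x_n^*\leq x^{**}$ and $g_n\prec x^*$, and property~2.12 in \cite{KPS} yields $g_n^*\rightarrow 0$ pointwise and in measure. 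I would then split along $M_n^k=\{t\in I:g_n^*(t)>1/k\}$, noting $\mu(M_n^k)\rightarrow 0$ as $n\rightarrow\infty$ for each fixed $k$. The \emph{head} over $[0,\mu(M_n^k)]$ is handled identically to the bounded case: Hardy's lemma with the test function $y=\chi_{M_n^k}$ together with Corollary~4.7 in \cite{BS} gives $\norm{g_n^*\chi_{[0,\mu(M_n^k)]}}{E}{}\leq\norm{x^*\chi_{[0,\mu(M_n^k)]}}{E}{}$, and the right-hand side tends to $0$ by order continuity of $x^*$ since $\mu(M_n^k)\rightarrow 0$.

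The one genuinely new step, and the main obstacle, is the \emph{tail} over $(\mu(M_n^k),\infty)$: on $[0,1)$ it was bounded crudely by $\tfrac1k\norm{\chi_I}{E}{}$, but on $[0,\infty)$ the function $\chi_I$ need not lie in $E$, so that estimate is unavailable. Here both extra hypotheses enter. Put $h_n=g_n^*\chi_{(\mu(M_n^k),\infty)}$ and $L=\norm{x}{L^1}{}$; then $h_n\leq 1/k$ everywhere, while $g_n\prec x^*$ and $x\in L^1$ force $\int_0^\infty h_n\leq\int_0^\infty g_n^*\leq\int_0^\infty x^*=L$. Consequently $\int_0^t h_n^*\leq\min\{t/k,L\}$ for all $t$, that is $h_n^{**}(t)\leq\min\{1/k,L/t\}=(\tfrac1k\chi_{(0,kL)})^{**}(t)$, so $h_n\prec\tfrac1k\chi_{(0,kL)}$. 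Since $E$ has the Fatou property it is $K$-monotone, whence $\norm{h_n}{E}{}=\norm{h_n^*}{E}{}\leq\tfrac1k\phi(kL)=L\,\phi(kL)/(kL)$, a bound \emph{independent of $n$}. The hypothesis $\phi(t)/t\rightarrow 0$ as $t\rightarrow\infty$ makes $\phi(kL)/(kL)\rightarrow 0$ as $k\rightarrow\infty$. Thus, given $\epsilon>0$, I would first fix $k$ so large that the tail bound is below $\epsilon/2$ for every $n$, and then, with this $k$ frozen, choose $N$ so that the head bound is below $\epsilon/2$ for all $n\geq N$; the triangle inequality gives $\norm{(x_n^*-x^*)^+}{E}{}\rightarrow 0$ and completes the sufficiency. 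The only point requiring care is the majorization $h_n\prec\tfrac1k\chi_{(0,kL)}$, which as indicated follows at once from the two elementary bounds $\int_0^t h_n^*\leq t/k$ and $\int_0^t h_n^*\leq L$.
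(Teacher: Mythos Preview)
Your proof is correct, and the necessity, the negative-part estimate, and the head estimate all track the paper's Theorem~\ref{thm:LLUKM<=>LKM&OC} closely. The genuine novelty is your treatment of the tail of $(x_n^*-x^*)^+$, and here you take a different and cleaner route than the paper.

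The paper splits $y_n=(x_n^*-x^*)^+$ at a \emph{fixed} level-set point $t_\epsilon=d_{x^*}(\epsilon)$, then replaces the tail $y_n^*\chi_{[t_\epsilon,\infty)}$ by a flattened majorant $y_n^*(t_\epsilon)\chi_{[t_\epsilon,\delta_n)}$, where $\delta_n$ is chosen to preserve the total integral; it verifies $y_n\prec z_n$ by direct computation and then argues by contradiction that $y_n^*(t_\epsilon)\phi(\delta_n-t_\epsilon)\to 0$, using $y_n^*(t_\epsilon)\to 0$ together with $\phi(t)/t\to 0$. Your argument instead keeps the $M_n^k$ splitting from Theorem~\ref{thm:LLUKM<=>LKM&OC} and observes that the tail $h_n$ is uniformly bounded by $1/k$ with $L^1$ mass at most $L=\norm{x}{L^1}{}$, whence the single majorization $h_n\prec\tfrac1k\chi_{(0,kL)}$ gives $\norm{h_n}{E}{}\leq L\,\phi(kL)/(kL)$ directly by $K$-monotonicity. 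This is shorter: it avoids the auxiliary construction of $z_n$, the verification that $y_n\prec z_n$, and the proof by contradiction. Both arguments consume the hypotheses $x\in L^1$ and $\phi(t)/t\to 0$ in exactly the same places (bounding the total mass of the tail, and killing the fundamental-function factor), so neither is more general; yours is simply more economical.
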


\begin{proof}
Notice that proceeding analogously as in the proof of Theorem \ref{thm:LLUKM<=>LKM&OC} in sufficiency it is enough to show condition \eqref{equ:second:conv}. First, let us mention that by Lemma 2.5 in \cite{CieKolPan} and by Lemma 3.1 in \cite{Cies-JAT} and in view of the assumption $x$ is a point of order continuity it follows that $x^*(\infty)=x^{**}(\infty)=0$. Let $\epsilon>0$ and $t_\epsilon=d_{x^*}(\epsilon)$. Then, it is clear that $t_\epsilon<\infty$, and so by monotonicity of the decreasing rearrangement $x^*$ we obtain $x^*(t)\leq{\epsilon}$ for all $t\geq{t_\epsilon}$. For simplicity of our notation let us assume that $y_n=(x_n^*-x^*)^+$ for any $n\in\mathbb{N}$. First we claim that 
\begin{equation}\label{equ:first:part:conv}
\norm{y_n^*\chi_{[0,t_\epsilon)}}{E}{}\rightarrow{0}.
\end{equation}
Define a set
\begin{equation*}
	A_n=\left\{t\in[0,t_\epsilon]:x^*(t)\leq{y_n^*}(t)\right\}
\end{equation*}
for every $n\in\mathbb{N}$. Then, by monotonicity of $x^*$, it is easy to see that $x^*(t)\geq{\epsilon}$ for any $t\leq{t_\epsilon}$. Next, in view of condition \eqref{equ:conv:seq:star} we observe
\begin{equation}\label{equ:measu:set:conv}
\mu\left(A_n\right)\leq\mu\left(t\in[0,t_\epsilon]:y_n^*(t)\geq{\epsilon}\right)\rightarrow{0}.
\end{equation}
Moreover, by condition \eqref{equ:hardy:ineq} we obtain
\begin{equation*}
	\int_{0}^{t}y_n^*\chi_{[0,\mu(A_n)]}\leq\int_{0}^{t}x^*\chi_{[0,\mu(A_n)]}
\end{equation*}
for all $n\in\mathbb{N}$ and $t>0$. Hence, by Proposition 1.1 in \cite{ChSu} for any $t>0$ and $n\in\mathbb{N}$ we get
\begin{align*}
\left(y_n^*\chi_{A_n}\right)^{**}(t)&=\frac{1}{t}\int_{0}^{t}\left(y_n^*\chi_{A_n}\right)^*\leq\frac{1}{t}\int_{0}^{t}y_n^*\chi_{[0,\mu(A_n)]}\\
&\leq\left(x^*\chi_{[0,\mu(A_n)]}\right)^{**}(t)\leq{x}^{**}(t).
\end{align*}
Thus, by symmetry of $E$ we conclude
\begin{align*}
\norm{y_n^*\chi_{[0,t_\epsilon)}}{E}{}&\leq\norm{y_n^*\chi_{A_n}}{E}{}+\norm{y_n^*\chi_{[0,t_\epsilon)\setminus{}A_n}}{E}{}\\
&\leq\norm{x^*\chi_{[0,\mu(A_n)]}}{E}{}+\norm{y_n^*\chi_{[0,t_\epsilon)\setminus{}A_n}}{E}{}
\end{align*}
for each $n\in\mathbb{N}$. Consequently, since $y_n^*\chi_{[0,t_\epsilon)\setminus{}A_n}\leq{x^*}$ for any $n\in\mathbb{N}$, by conditions \eqref{equ:conv:seq:star} and \eqref{equ:measu:set:conv} as well as by assumption that $x$ is a point of order continuity and in view of Lemma 2.6 in \cite{CieKolPan} we prove our claim \eqref{equ:first:part:conv}. Now, without loss of generality passing to subsequence and relabelling we may assume that $y_n^*(t_\epsilon)>0$ for all $n\in\mathbb{N}$, because otherwise in view of the claim \eqref{equ:first:part:conv} we finish the proof. Furthermore, by condition \eqref{equ:ineq:star} and by assumption that $x\in{}E\cap{L^1}$ it is easy to notice that
\begin{equation*}	\int_{t_\epsilon}^{\infty}y_n^*\leq\int_{0}^{\infty}y_n^*\leq\int_{0}^{\infty}x^*<\infty
\end{equation*}
for all $n\in\mathbb{N}$. Denote for any $n\in\mathbb{N}$,
\begin{equation*}
\delta_n=t_\epsilon+\frac{1}{y_n^*(t_\epsilon)}\int_{t_\epsilon}^{\infty}y_n^*\quad\textnormal{and}\quad{}z_n=y_n^*\chi_{[0,t_\epsilon)}+y_n^*(t_\epsilon)\chi_{[t_\epsilon,\delta_n)}.
\end{equation*}
Now, we prove that 
\begin{equation}\label{equ:rest:conv}
\norm{y_n^*(t_\epsilon)\chi_{[t_\epsilon,\delta_n)}}{E}{}\rightarrow{0}.
\end{equation}
Assume for a contrary that $a=\inf_{n\in\mathbb{N}}\norm{y_n^*(t_\epsilon)\chi_{[t_\epsilon,\delta_n)}}{E}{}>0$. Then, passing to subsequence and relabelling if necessary we obtain 
	$$\norm{y_n^*(t_\epsilon)\chi_{[t_\epsilon,\delta_n)}}{E}{}\downarrow{a}.$$
	Hence, for any $n\in\mathbb{N}$ we notice that
	\begin{align*}
	a\leq\norm{y_n^*(t_\epsilon)\chi_{[t_\epsilon,\delta_n)}}{E}{}=&y_n^*(t_\epsilon)\phi(\delta_n-t_\epsilon)\\
	=&y_n^*(t_\epsilon)\phi\left(\frac{1}{y_n^*(t_\epsilon)}\int_{t_\epsilon}^{\infty}y_n^*\right)\\
	\leq&y_n^*(t_\epsilon)\phi\left(\frac{1}{y_n^*(t_\epsilon)}\int_{0}^{\infty}x^*\right).
	\end{align*}
	Therefore, letting $s_n=\int_{0}^{\infty}x^*/{y_n^*(t_\epsilon)}$ for all $n\in\mathbb{N}$ we have
	\begin{align*}
	a\leq&\frac{\phi(s_n)}{s_n}\int_{0}^{\infty}x^*
	\end{align*}
for all $n\in\mathbb{N}$. According to condition \eqref{equ:conv:seq:star} we observe $y_n^*(t_\epsilon)\rightarrow{0}$ and so $s_n\rightarrow\infty$. In consequence, by assumption that $\phi(t)/t\rightarrow{0}$ as $t\rightarrow\infty$ we get a contradiction which provides condition \eqref{equ:rest:conv}. Now, we show that $y_n\prec{z_n}$ for all $n\in\mathbb{N}$. Obviously, $y_n^{**}=z_n^{**}$ on $[0,t_\epsilon]$ for each $n\in\mathbb{N}$. Moreover, for any $n\in\mathbb{N}$ and $t\in(t_\epsilon,\delta_n)$ we have
\begin{align*}
\int_{0}^{t}z_n^*=\int_{0}^{t_\epsilon}y_n^*+y_n^*(t_\epsilon)(t-t_\epsilon)\geq\int_{0}^{t_\epsilon}y_n^*+\int_{t_\epsilon}^{t}y_n^*=\int_{0}^{t}y_n^*
\end{align*}
and also for any $t\geq{\delta_n}$,
\begin{align*}
\int_{0}^{t}z_n^*=\int_{0}^{t_\epsilon}y_n^*+y_n^*(t_\epsilon)(\delta_n-t_\epsilon)=\int_{0}^{t_\epsilon}y_n^*+\int_{t_\epsilon}^{\infty}y_n^*\geq\int_{0}^{t}y_n^*.
\end{align*}
Therefore, by symmetry of $E$ we get $\norm{z_n}{E}{}\geq\norm{y_n}{E}{}$. Thus, by conditions \eqref{equ:first:part:conv} and \eqref{equ:rest:conv} and by the triangle inequality of the norm in $E$ we complete the proof.
\end{proof}

Immediately, in view of Remark 3.1 in \cite{Cies-geom}, by Proposition \ref{prop:LLUKM=>OC} and Theorems \ref{thm:LLUKM<=>LKM&OC} and \ref{thm:LLUKM<=>LKM&OC_2} we obtain the following results.
\begin{corollary}\label{coro1:LLUKM<=>OC&SKM}
	Let $E$ be a symmetric space on $I=[0,\alpha)$ with $\alpha<\infty$. The space $E$ is $LLUKM$ if and only if $E$ is strictly $K$-monotone and order continuous.
\end{corollary}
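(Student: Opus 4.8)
The plan is to \emph{globalize} the pointwise characterization recorded in Theorem~\ref{thm:LLUKM<=>LKM&OC}. Since the hypothesis $\alpha<\infty$ forces $I=[0,1)$, that theorem is available at every point of $E$, and this is the only one of the two theorems needed here (Theorem~\ref{thm:LLUKM<=>LKM&OC_2} concerns $[0,\infty)$ and belongs to the companion case $\alpha=\infty$). All three space-level notions in the statement are, by definition, the requirement that \emph{every} $x\in E$ be, respectively, an $LLUKM$ point, an $LKM$ point, and a point of order continuity. Hence I would prove the two implications simply by quantifying the corresponding pointwise facts over all $x\in E$, combining Theorem~\ref{thm:LLUKM<=>LKM&OC}, Proposition~\ref{prop:LLUKM=>OC}, and Remark~3.1 in \cite{Cies-geom}.

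For sufficiency, assume $E$ is strictly $K$-monotone and order continuous, and fix an arbitrary $x\in E$. Then $x$ is simultaneously an $LKM$ point (by strict $K$-monotonicity) and a point of order continuity (by order continuity of $E$), so the implication of Theorem~\ref{thm:LLUKM<=>LKM&OC} from ``$x$ is an $LKM$ point and a point of order continuity'' to ``$x$ is an $LLUKM$ point'' shows that $x$ is an $LLUKM$ point. As $x$ was arbitrary, $E$ is $LLUKM$.

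For necessity, assume $E$ is $LLUKM$. Proposition~\ref{prop:LLUKM=>OC} immediately gives that $E$ is order continuous, so it remains to verify strict $K$-monotonicity, i.e. that each $x\in E$ is an $LKM$ point. Fixing $x\in E$, it is an $LLUKM$ point by hypothesis, and Remark~3.1 in \cite{Cies-geom} directly supplies that every $LLUKM$ point is an $LKM$ point; alternatively one may apply the converse implication of Theorem~\ref{thm:LLUKM<=>LKM&OC}, having first recovered its extra hypothesis from order continuity of $x$ via the estimate $x^*(t)\phi(t)=\norm{x^*(t)\chi_{(0,t)}}{E}{}\le\norm{x^*\chi_{(0,t)}}{E}{}$ together with $x^*\chi_{(0,t)}\to 0$ a.e. as $t\to 0^+$. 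Either route gives that $x$ is an $LKM$ point, whence $E$ is strictly $K$-monotone.

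The only delicate point — the step I would flag as the main obstacle — is the compatibility of the pointwise hypothesis $\lim_{t\to 0^+}x^*(t)\phi(t)=0$ appearing in Theorem~\ref{thm:LLUKM<=>LKM&OC} with the purely global assumptions of the corollary, since in the necessity direction this limit condition is not granted a priori. My resolution is to note that it is automatic once order continuity of the point is known (and order continuity is already delivered by Proposition~\ref{prop:LLUKM=>OC}), or else to bypass it entirely by invoking Remark~3.1. After this matching is secured, the corollary is nothing more than the pointwise equivalence read uniformly over $E$, and no additional computation is required.
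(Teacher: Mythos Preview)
Your proposal is correct and follows essentially the same route as the paper: the corollary is obtained by combining Proposition~\ref{prop:LLUKM=>OC}, Remark~3.1 of \cite{Cies-geom}, and Theorem~\ref{thm:LLUKM<=>LKM&OC}, quantified over all $x\in E$. Your observation that Theorem~\ref{thm:LLUKM<=>LKM&OC_2} is irrelevant here (it is invoked only for the companion Corollary~\ref{coro2:LLUKM<=>OC&SKM}) and your handling of the auxiliary condition $x^*(t)\phi(t)\to 0$ are both accurate.
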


\begin{corollary}\label{coro2:LLUKM<=>OC&SKM}
	Let $E$ be a symmetric space on $I=[0,\infty)$ with the fundamental function $\phi$ such that $\phi(t)/t\rightarrow{0}$ as $t\rightarrow\infty$ and let $F\subset{E}$ be a symmetric sublattice that is embedded in $L^1[0,\infty)$. Then, the space $F$ is $LLUKM$ if and only if $F$ is strictly $K$-monotone and order continuous.
\end{corollary}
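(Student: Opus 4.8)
The plan is to pass to the pointwise level and prove, for each $x\in F$, the equivalence: $x$ is an $LLUKM$ point if and only if $x$ is simultaneously an $LKM$ point and a point of order continuity. Regard $F$ as a symmetric space on $[0,\infty)$ (so the standing Fatou assumption applies); since $F\hookrightarrow L^{1}$, every $x\in F$ lies in $F\cap L^{1}$, so $x^{*}(\infty)=x^{**}(\infty)=0$. As $F\in(LLUKM)$, $F\in(SKM)$, $F\in(OC)$ are all ``every point'' properties, and a point of order continuity $x\neq 0$ automatically forces $\lim_{t\to0^{+}}x^{*}(t)\phi_{F}(t)=0$ (with $\phi_{F}$ the fundamental function of $F$), the global statement follows once the pointwise equivalence is in hand.

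\emph{Necessity.} If $F\in(LLUKM)$, then every $x\in F$ is an $LLUKM$ point, hence an $LKM$ point by Remark 3.1 in \cite{Cies-geom}, so $F$ is strictly $K$-monotone. Applying Proposition \ref{prop:LLUKM=>OC} to $F$ gives that $F$ is order continuous.

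\emph{Sufficiency.} Assume $F$ is strictly $K$-monotone and order continuous and fix $x\in F$, so $x$ is an $LKM$ point and a point of order continuity. Let $(x_{n})\subset F$ with $x_{n}\prec x$ and $\norm{x_{n}}{F}{}\to\norm{x}{F}{}$. Since $x$ is an $LKM$ point and $x^{*}(\infty)=0$, Theorem 3.2 in \cite{Cies-SKM-KOC} yields $x_{n}^{*}\to x^{*}$ globally in measure, hence a.e. along subsequences. Fix $\epsilon>0$ and set $t_{\epsilon}=d_{x^{*}}(\epsilon)<\infty$. The ``head'' part on $[0,t_{\epsilon})$ is handled exactly as in the proof of Theorem \ref{thm:LLUKM<=>LKM&OC_2}: order continuity of $x$ controls the negative part $(x^{*}-x_{n}^{*})^{+}$ on all of $I$ and the positive part $(x_{n}^{*}-x^{*})^{+}$ on $[0,t_{\epsilon})$, giving $\norm{(x^{*}-x_{n}^{*})\chi_{[0,t_{\epsilon})}}{F}{}\to0$.

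The main obstacle is the ``tail'' on $[t_{\epsilon},\infty)$. The compression device of Theorem \ref{thm:LLUKM<=>LKM&OC_2}, which spreads the tail into a wide rectangle and invokes $\phi_{F}(t)/t\to0$, is \emph{unavailable} here: from $F\hookrightarrow L^{1}$ one gets $\phi_{F}(t)\ge t/C$, so $\phi_{F}(t)/t$ is bounded below and does not tend to $0$. This is exactly where the ambient $E$ with $\phi_{E}(t)/t\to0$ is brought into the stated route; I would instead argue directly from $x\in L^{1}$. Since $x_{n}\prec x$ gives $\int_{0}^{\infty}x_{n}^{*}\le\int_{0}^{\infty}x^{*}$ while Fatou's lemma gives the reverse inequality in the limit, we obtain $\int_{0}^{\infty}x_{n}^{*}\to\int_{0}^{\infty}x^{*}$; together with a.e.\ convergence, Scheffé's lemma (via the usual subsequence argument) yields the free $L^{1}$-convergence $\norm{x_{n}^{*}-x^{*}}{L^{1}}{}\to0$. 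On $[t_{\epsilon},\infty)$ one has the uniform bound $|x_{n}^{*}-x^{*}|\le x^{**}(t_{\epsilon})+\epsilon=:M$, while $\eta_{n}:=\norm{(x_{n}^{*}-x^{*})\chi_{[t_{\epsilon},\infty)}}{L^{1}}{}\to0$. Hence $|(x_{n}^{*}-x^{*})\chi_{[t_{\epsilon},\infty)}|\prec M\chi_{[0,\eta_{n}/M)}$, and by $K$-monotonicity of $F$ (a consequence of its order continuity),
\begin{equation*}
\norm{(x_{n}^{*}-x^{*})\chi_{[t_{\epsilon},\infty)}}{F}{}\le M\,\phi_{F}(\eta_{n}/M)\to0,
\end{equation*}
because $\eta_{n}/M\to0$ and $\phi_{F}(0^{+})=0$ by order continuity. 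Combining the head and tail estimates gives $\norm{x^{*}-x_{n}^{*}}{F}{}\to0$, so $x$ is an $LLUKM$ point, which finishes the sufficiency. I expect the tail estimate — and in particular reconciling it with the fact that $\phi_{F}(t)/t\not\to0$ for $F\hookrightarrow L^{1}$ — to be the one genuinely delicate point; the reduction to a shrinking characteristic function through the relation $\prec$ is what makes it go through without the rectangle-spreading argument.
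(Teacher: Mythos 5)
Your proof is correct, but it genuinely departs from the paper's route for the sufficiency half. The paper proves this corollary in one line, as an immediate consequence of Remark 3.1 in \cite{Cies-geom} (an $LLUKM$ point is an $LKM$ point), Proposition \ref{prop:LLUKM=>OC} ($LLUKM$ implies order continuity), and Theorem \ref{thm:LLUKM<=>LKM&OC_2} — whose tail estimate spreads $(x_n^*-x^*)^+\chi_{[t_\epsilon,\infty)}$ into the flat block $y_n^*(t_\epsilon)\chi_{[t_\epsilon,\delta_n)}$ and kills it using $\phi(t)/t\to0$ at infinity. Your necessity argument coincides with the paper's. For sufficiency, your observation that the theorem cannot be invoked with $F$ itself in the role of $E$ is accurate and well taken: $F\hookrightarrow L^1$ forces $\phi_F(t)\geq t/C$, so $\phi_F(t)/t$ is bounded away from $0$, which is exactly why the corollary's statement drags in the ambient space $E$; the paper's ``immediately'' leaves implicit how the $E$-level theorem transfers to the $F$-level properties. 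Your replacement of the tail step is sound and checks out in detail: $x_n\prec x\in L^1$ gives $\int_0^\infty x_n^*\leq\int_0^\infty x^*<\infty$ (so integrability of $x_n^*$ is automatic, even before using the embedding), Fatou plus this inequality gives convergence of the $L^1$-norms, Scheff\'e (through the standard subsequence reduction) gives $\Vert x_n^*-x^*\Vert_{L^1}\to0$, the majorization $(x_n^*-x^*)\chi_{[t_\epsilon,\infty)}\prec M\chi_{[0,\eta_n/M)}$ holds because $\int_0^t g_n^*\leq\min\{Mt,\eta_n\}$, $K$-monotonicity of $F$ is available from the standing Fatou assumption, and $\phi_F(0^+)=0$ follows from order continuity; the head on $[0,t_\epsilon)$ indeed reuses only the $\phi$-free portion of the proof of Theorem \ref{thm:LLUKM<=>LKM&OC_2} (the claim \eqref{equ:first:part:conv} via Hardy's lemma, Proposition 1.1 of \cite{ChSu}, and Lemma 2.6 of \cite{CieKolPan}), all of which works intrinsically in $F$. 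The trade-off: the paper's proof is a citation of already-proven machinery, while yours redoes the tail but is self-contained within $F$, avoids any transfer of pointwise $E$-properties to $F$, and in fact proves a slightly stronger statement — the hypothesis $\phi_E(t)/t\to0$, and the ambient $E$ altogether, are superfluous: the equivalence holds for every symmetric space $F$ continuously embedded in $L^1[0,\infty)$.
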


Now, we investigate a relation between lower local uniform $K$- monotonicity and the Kadec-Klee property for global convergence in measure. First, we show an example of a function in a symmetric space $E$ on $I=[0,\infty)$ that is a point of lower local uniform $K$-monotonicity but it is no $H_g$ point in $E$.
We also discuss in this example a symmetric space $E$ on $I=[0,1)$ that is lower local uniformly $K$-monotone but it does not have the Kadec-Klee property for global convergence in measure.  We recall Example 2.8 \cite{ChDSS} and modify to the case when $I=[0,\alpha)$, where $\alpha\leq\infty$. For the sake of the reader's convenience we present the details of the modified example. 

\begin{example}
Let $\delta>0$ and let  $\phi_1$, $\phi_2$ be strictly concave functions such that $$\phi_i(0)=\phi_i(0^+)=0\quad\textnormal{and}\quad{\phi_i(\infty)=\lim_{t\rightarrow\infty}\phi_i(t)=\infty}\quad\textnormal{for }i=1,2,$$
and also
$$\phi_2(1)>\phi_1(1)+\delta\quad\textnormal{and}\quad{\lim_{t\rightarrow{0}}\frac{\phi_2(t)}{\phi_1(t)}=\lim_{t\rightarrow\infty}\frac{\phi_i(t)}{t}=0}\quad\textnormal{for }i=1,2.$$
Consider the space $E=\Lambda_{1,\phi_1'}\cap\Lambda_{1,\phi_2'}$ with a norm given by
$$\norm{x}{E}{}=\max\{\norm{x}{\Lambda_{1,\phi_1'}}{},\norm{x}{\Lambda_{1,\phi_2'}}{}\}$$
for all $x\in{E}$. Since $\phi_i(\infty)=\infty$ for $i=1,2$ it follows that the symmetric space $E$ is order continuous (see \cite{ChDSS,KMGam}). Hence, since $\phi_1$ and $\phi_2$ are strictly concave, by Theorem 2.11 in \cite{ChDSS} we get $E$ is strictly $K$-monotone. Consequently, in case when $I=[0,1)$, by Corollary \ref{coro1:LLUKM<=>OC&SKM} we obtain $E$ is $LLUKM$. Define 
\begin{equation*}
x=\chi_{[0,1]}\quad\textnormal{and}\quad{}x_n=x+\frac{\delta}{\phi_1(\frac{1}{n})}\chi_{[0,\frac{1}{n})}
\end{equation*} 
for any $n\in\mathbb{N}$. Obviously, $x_n\rightarrow{x}$ in measure and 
\begin{equation*}
\norm{x_n}{E}{}=\frac{\delta\phi_2(\frac{1}{n})}{\phi_1(\frac{1}{n})}+\phi_2(1)\rightarrow\phi_2(1)=\norm{x}{E}{}.
\end{equation*}
On the other hand, we observe $\norm{x_n-x}{E}{}\geq\delta$ for any $n\in\mathbb{N}$, which concludes that $x$ is no $H_g$ point in $E$ and consequently $E$ does not have the Kadec-Klee property for global convergence in measure. However, since $x\in{L^1[0,\infty)}$, by Theorem \ref{thm:LLUKM<=>LKM&OC_2} we get $x$ is an $LLUKM$ point in the space $E$ on $I=[0,\infty)$.
\end{example}

\begin{theorem}\label{thm:LLUKM&Hg}
Let $E$ be a symmetric space and $x,x_n\in{E}$ with $x^*(\infty)=0$ and let:
\begin{itemize}
	\item[$(i)$] $x$ is an $LKM$ point and an $H_g$ point.
	\item[$(ii)$] $x$ is an $LKM$ point and $$x_n^{**}\rightarrow{x^{**}}\quad\textnormal{in measure,}\quad\norm{x_n}{E}{}\rightarrow\norm{x}{E}{}\quad\Rightarrow\quad\norm{x_n^*-x^*}{E}{}\rightarrow{0}.$$
	\item[$(iii)$] $x$ is an $LLUKM$ point.
\end{itemize}
Then, $(i)\Rightarrow(ii)\Rightarrow(iii)$. If $x$ is an $H_g$ point, then $(iii)\Rightarrow(i)$.
\end{theorem}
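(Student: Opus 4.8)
The plan is to establish the cycle $(i)\Rightarrow(ii)\Rightarrow(iii)$ and then the easy reverse direction. Note first that in both $(i)$ and $(ii)$ the requirement that $x$ be an $LKM$ point appears as a separate conjunct, so it transfers with no work; the real content of $(i)\Rightarrow(ii)$ is to verify the displayed implication of $(ii)$ using the $H_g$ property, and the content of $(ii)\Rightarrow(iii)$ is to feed a sequence with $x_n\prec x$ into that implication. For the reverse, I would use that an $LLUKM$ point is automatically an $LKM$ point: given $y\prec x$ with $y^*\neq x^*$, the space is $K$-monotone (Fatou property), so $\|y\|_E\le\|x\|_E$, and were $\|y\|_E=\|x\|_E$ the constant sequence $x_n=y$ would satisfy $x_n\prec x$, $\|x_n\|_E\to\|x\|_E$, forcing $\|y^*-x^*\|_E=0$ by $(iii)$, a contradiction; this is Remark 3.1 in \cite{Cies-geom}. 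Hence, when $x$ is in addition assumed an $H_g$ point, $(iii)$ already yields both conjuncts of $(i)$.

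For $(i)\Rightarrow(ii)$, suppose $x_n^{**}\to x^{**}$ in measure and $\norm{x_n}{E}{}\to\norm{x}{E}{}$. Since $x^*(\infty)=0$ gives $x^{**}(\infty)=0$ by Lemma 3.1 in \cite{Cies-JAT}, and $x^{**}$ is continuous and decreasing, convergence in measure upgrades to pointwise convergence $x_n^{**}(t)\to x^{**}(t)$ for every $t>0$ (property 2.11 in \cite{KPS}). Rewriting this as $\int_0^t x_n^*\to\int_0^t x^*$ and using monotonicity of the rearrangements, a comparison of left and right difference quotients shows $x_n^*\to x^*$ at every Lebesgue point of $x^*$, hence a.e. Together with the tail estimate $x_n^*(t)\le x_n^{**}(T)\to x^{**}(T)$ for $t\ge T$, where $T$ is chosen so that $x^{**}(T)$ is small, this promotes the a.e. convergence (which on the finite-measure set $[0,T)$ is convergence in measure) to global convergence in measure $x_n^*\to x^*$. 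The decisive step is then to invoke the $H_g$ property, which is stated for $x$ rather than for $x^*$: I would fix a measure-preserving transformation $\sigma$ of $I$ with $\abs{x}{}{}=x^*\circ\sigma$ a.e. (see \cite{BS}) and set $z_n=x_n^*\circ\sigma$. Then $z_n\sim x_n$, so $\norm{z_n}{E}{}=\norm{x_n}{E}{}\to\norm{x}{E}{}$; moreover $z_n\to\abs{x}{}{}$ globally in measure and $\norm{z_n-\abs{x}{}{}}{E}{}=\norm{x_n^*-x^*}{E}{}$. Since $\abs{x}{}{}$ inherits the $H_g$ property from $x$ (multiplication by $\sg(x)$ is a surjective isometry preserving convergence in measure), the $H_g$ property forces $\norm{z_n-\abs{x}{}{}}{E}{}\to 0$, i.e. $\norm{x_n^*-x^*}{E}{}\to 0$.

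For $(ii)\Rightarrow(iii)$, let $x_n\prec x$ for all $n$ and $\norm{x_n}{E}{}\to\norm{x}{E}{}$. Because $x$ is an $LKM$ point and $x^*(\infty)=0$, Theorem 3.2 in \cite{Cies-SKM-KOC} gives $x_n^*\to x^*$ globally in measure, and by monotonicity this yields $x_n^*\to x^*$ a.e. The relation $x_n\prec x$ gives $\int_0^t x_n^*\le\int_0^t x^*$, while Fatou's lemma on $[0,t]$ gives $\int_0^t x^*\le\liminf_n\int_0^t x_n^*$; hence $\int_0^t x_n^*\to\int_0^t x^*$ and so $x_n^{**}\to x^{**}$ pointwise, in particular in measure. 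The displayed implication of $(ii)$ now applies and gives $\norm{x_n^*-x^*}{E}{}\to 0$, which is precisely what an $LLUKM$ point requires.

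I expect the main obstacle to lie at the end of $(i)\Rightarrow(ii)$: the $H_g$ hypothesis is phrased in terms of genuine functions converging to $x$, whereas all the convergence extracted from $x_n^{**}\to x^{**}$ lives at the level of the decreasing rearrangements. Bridging this gap—transporting the global-in-measure convergence of $x_n^*$ to $x^*$ back to a sequence converging to $\abs{x}{}{}$ without disturbing the norms—is the step that requires care, and it is exactly where the measure-preserving transformation $\sigma$, together with the rearrangement-invariance of both the norm and of convergence in measure, does the work. Everything else reduces to standard facts about maximal functions and to the already-cited Theorem 3.2 in \cite{Cies-SKM-KOC}.
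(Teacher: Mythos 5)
Your proof is correct and, for two of the three implications, coincides with the paper's: your $(ii)\Rightarrow(iii)$ is the paper's argument (the paper cites Theorem 3.2 in \cite{Cies-SKM-KOC} as directly yielding $x_n^{**}\rightarrow{x^{**}}$ in measure, whereas you re-derive this from $x_n^{*}\rightarrow{x^{*}}$ together with $x_n\prec{x}$ and Fatou's lemma --- a harmless detour), and your $(iii)\Rightarrow(i)$ via the constant sequence $x_n=y$ is exactly the content of Remark 3.1 in \cite{Cies-geom}, which the paper simply cites. The genuine difference lies in $(i)\Rightarrow(ii)$: the paper gives no self-contained argument there, saying only that one proceeds ``analogously as in the proof of Theorem 3.8 in \cite{Cies-geom}'', with Theorem 3.3 in \cite{CieKolPlu} supplying the bridge between the $H_g$ property of $x$ and norm convergence of the rearrangements; you instead unpack that bridge explicitly with a Ryff-type measure-preserving transformation $\sigma$ satisfying $\abs{x}{}{}=x^{*}\circ\sigma$, which makes the proof readable without chasing two external proofs. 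Two technical debts in that step should be settled. First, the result you need from \cite{BS} (which requires precisely the hypothesis $x^{*}(\infty)=0$, so it is available here) produces $\sigma$ only from $\supp{x}$ onto $[0,\mu(\supp{x}))$; when $\mu(\supp{x})<\alpha$ you must extend it by a measure-preserving map of $I\setminus\supp{x}$ onto $[\mu(\supp{x}),\alpha)$, since otherwise $z_n=x_n^{*}\circ\sigma$ is equimeasurable with $x_n^{*}\chi_{[0,\mu(\supp{x}))}$ rather than with $x_n^{*}$, and neither $\norm{z_n}{E}{}=\norm{x_n}{E}{}$ nor $\norm{z_n-\abs{x}{}{}}{E}{}=\norm{x_n^{*}-x^{*}}{E}{}$ would follow. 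Second, your parenthetical that multiplication by $\sg(x)$ transfers the $H_g$ property from $x$ to $\abs{x}{}{}$ is too quick, because $\sg(x)$ vanishes on $\{x=0\}$; use the multiplier $\sg(x)+\chi_{\{x=0\}}$, for which $w_n=y_n\left(\sg(x)+\chi_{\{x=0\}}\right)$ satisfies $\abs{w_n-x}{}{}=\abs{y_n-\abs{x}{}{}}{}{}$ a.e., so the global convergence in measure, the norms, and the conclusion all transfer. With these repairs your intermediate steps --- pointwise convergence of the concave functions $t\mapsto{}tx_n^{**}(t)$ forcing $x_n^{*}(t)\rightarrow{x^{*}(t)}$ at continuity points of $x^{*}$, and the tail bound $x_n^{*}(t)\leq{x_n^{**}(T)}$ together with $x^{**}(\infty)=0$ (Lemma 3.1 in \cite{Cies-JAT}) upgrading a.e. convergence to global convergence in measure --- are sound, and your route establishes the same statement along the same skeleton, with the paper's key citation replaced by an explicit construction.
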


\begin{proof}
$(i)\Rightarrow(ii)$. Let $x,x_n\in{E}$ for any $n\in\mathbb{N}$, $x_n^{**}\rightarrow{x^{**}}$ in measure and $\norm{x_n}{E}{}\rightarrow\norm{x}{E}{}$. Now, proceeding analogously as in the proof of Theorem 3.8 \cite{Cies-geom}, under the assumption that $x$ is an $H_g$ point and $x^*(\infty)=0$, in view of Theorem 3.3 \cite{CieKolPlu} we complete the proof.\\
$(ii)\Rightarrow(iii)$. Let $x,x_n\in{E}$, $x_n\prec{x}$ for any $n\in\mathbb{N}$ and $\norm{x_n}{E}{}\rightarrow\norm{x}{E}{}$. Hence, by Theorem 3.2 in \cite{Cies-SKM-KOC} it follows that $x_n^{**}\rightarrow{x^{**}}$ in measure. Therefore, by condition $(ii)$ we get $\norm{x_n^*-x^*}{E}{}\rightarrow{0}$, which proves that $x$ is an $LLUKM$ point.\\
$(iii)\Rightarrow(i)$. Let $x$ be an $H_g$ point in $E$. Immediately, by Remark 3.1 in \cite{Cies-geom} we get $x$ is an $LKM$ point and this ends the proof.
\end{proof}

In the next example we present a symmetric space with the Kadec-Klee property for global convergence in measure which does not have $LLUKM$ property.

\begin{example}
Consider the Lorentz space $\Gamma_{p,w}$ with $0<p<\infty$ and $w$ a nonnegative weight function. If $W(\infty)<\infty$ or $W(t)=\int_{0}^{t}w$ is not strictly increasing, then by Proposition 1.4 in \cite{KMGam} or by Theorem 2.10 in \cite{CieKolPluSKM} respectively, we obtain the Lorentz space $\Gamma_{p,w}$ is not order continuous or it is not strictly $K$-monotone respectively. Moreover, we have $\lim_{t\rightarrow{0^+}}\norm{x^*\chi_{[0,t)}}{\Gamma_{p,w}}{}=0$ (see \cite{KMGam}), whence and by the monotonicity of the decreasing rearrangement $x^*$ we get $\lim_{t\rightarrow{0^+}}x^*(t)\phi(t)=0$, where $\phi$ is the fundamental function of $\Gamma_{p,w}$. In consequence,  by Remark 3.1 in \cite{Cies-geom} or by Lemma \ref{lem:LLUKM=>OC} respectively, it follows that $\Gamma_{p,w}$ is not $LLUKM$. On the other hand, by Theorem 4.1 in \cite{CieKolPlu} we know that the Lorentz space $\Gamma_{p,w}$ has the Kadec-Klee property for global convergence in measure. 
\end{example}

Now, we present the full characterization of lower and upper local uniform $K$ monotonicity in a symmetric space $E$ with order continuous norm. Next, we establish a correlation between upper local uniform $K$-monotonicity and upper local uniform monotonicity in $E$.

\begin{theorem}\label{thm:complete:charac}
	Let $E$ be a symmetric space with order continuous norm. Then, the following conditions are equivalent.
	\begin{itemize}
		\item[$(i)$] $E$ is $SKM$ and for any $(x_n)\subset{E}$, $x\in{E}$, $$x_n^{**}\rightarrow{x^{**}}\quad\textnormal{in measure and}\quad\norm{x_n}{E}{}\rightarrow\norm{x}{E}{}\quad\Rightarrow\quad\norm{x_n^*-x^*}{E}{}\rightarrow{0}.$$
		\item[$(ii)$] $E$ is $LLUKM$ and has the Kadec-Klee property for global convergence in measure.
		\item[$(iii)$] $E$ is $SKM$ and has the Kadec-Klee property for global convergence in measure.
		\item[$(iv)$] $E$ is $SKM$ and has the Kadec-Klee property for local convergence in measure.
		\item[$(v)$] $E$ is $ULUKM$.
	\end{itemize}
\end{theorem}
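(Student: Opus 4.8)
The plan is to reduce the five global conditions to the pointwise result already obtained in Theorem~\ref{thm:LLUKM&Hg}, and then to glue on the two remaining conditions using the external characterizations of $H_g$ points and of $ULUKM$. The first observation is that since $E$ has order continuous norm, Lemma 2.5 in \cite{CieKolPan} gives $x^*(\infty)=0$ for every $x\in E$; this is exactly the standing hypothesis of Theorem~\ref{thm:LLUKM&Hg}, so I may apply that theorem (and Theorem 3.3 in \cite{CieKolPlu}) at each point of $E$. Writing $(\star)$ for the maximal-function implication appearing in Theorem~\ref{thm:LLUKM&Hg}$(ii)$, condition $(i)$ here says that every $x\in E$ is an $LKM$ point satisfying $(\star)$, condition $(iii)$ says that every $x$ is an $LKM$ point and an $H_g$ point, and condition $(ii)$ says that every $x$ is an $LLUKM$ point together with the global Kadec--Klee property. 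I would first prove $(i)\Leftrightarrow(ii)\Leftrightarrow(iii)$ and afterwards attach $(iv)$ and $(v)$.

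The equivalence $(ii)\Leftrightarrow(iii)$ and the implication $(iii)\Rightarrow(i)$ form the soft part. For $(ii)\Rightarrow(iii)$ I use Remark 3.1 in \cite{Cies-geom}: every $LLUKM$ point is an $LKM$ point, so $E\in(LLUKM)$ forces $E\in(SKM)$, while the global Kadec--Klee property is common to both statements. For $(iii)\Rightarrow(ii)$, every $x$ is an $LKM$ and $H_g$ point, so the chain $(i)\Rightarrow(ii)\Rightarrow(iii)$ of Theorem~\ref{thm:LLUKM&Hg} makes every $x$ an $LLUKM$ point, and the Kadec--Klee property is retained; hence $E\in(LLUKM)$. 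The same chain, stopping one step earlier at $(i)\Rightarrow(ii)$, gives $(iii)\Rightarrow(i)$, since an $LKM$, $H_g$ point satisfies $(\star)$.

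The genuinely one-directional input is $(i)\Rightarrow(iii)$, which amounts to promoting $(\star)$ to the honest $H_g$ property. Here I would fix $x\in E$ and a sequence $x_n\to x$ globally in measure with $\norm{x_n}{E}{}\rightarrow\norm{x}{E}{}$. Global convergence in measure carries over to the decreasing rearrangements, so $x_n^*\rightarrow x^*$ in measure and hence $x_n^{**}\rightarrow x^{**}$ in measure (see \cite{KPS}); condition $(\star)$ then yields $\norm{x_n^*-x^*}{E}{}\rightarrow 0$. Finally Theorem 3.3 in \cite{CieKolPlu}, which links the $H_g$ property with norm convergence of rearrangements, upgrades this to $\norm{x_n-x}{E}{}\rightarrow 0$. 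Thus $(\star)$ and $H_g$ coincide at $LKM$ points with $x^*(\infty)=0$, closing $(i)\Leftrightarrow(iii)$.

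It remains to place $(iv)$ and $(v)$. For $(iii)\Leftrightarrow(v)$ I invoke the Chilin--Dodds--Sedaev--Sukochev characterization, Theorem 2.11 in \cite{ChDSS}: a symmetric space is $ULUKM$ precisely when it is strictly $K$-monotone and has the Kadec--Klee property for global convergence in measure, and order continuity of $E$ makes the hypotheses applicable. For $(iii)\Leftrightarrow(iv)$, the implication $(iv)\Rightarrow(iii)$ is immediate because global convergence in measure implies local convergence in measure, so that the Kadec--Klee property for local convergence is formally the stronger one. The reverse $(iii)\Rightarrow(iv)$ is where I expect the main obstacle to lie: given $x_n\to x$ only locally in measure with $\norm{x_n}{E}{}\rightarrow\norm{x}{E}{}$, I must show that no mass escapes to infinity, so that local convergence may be promoted to global convergence in measure, after which $(iii)$ applies. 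The tightness argument would combine $x^*(\infty)=0$ with strict $K$-monotonicity, using Theorem 3.2 in \cite{Cies-SKM-KOC} to control the rearrangements $x_n^*$ and to rule out a persistent far-out bump that would be incompatible with $\norm{x_n}{E}{}\rightarrow\norm{x}{E}{}$.
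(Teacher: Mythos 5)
There is a genuine gap at the heart of your direct proof of $(i)\Rightarrow(iii)$, namely the step ``$x_n^*\rightarrow x^*$ in measure and hence $x_n^{**}\rightarrow x^{**}$ in measure (see \cite{KPS})''. Property 2.11 in \cite{KPS} transfers global convergence in measure of $x_n$ to convergence of the rearrangements $x_n^*$ at continuity points of $x^*$; it says nothing about maximal functions, and the implication you assert is false in general because mass may concentrate: for $x_n=n\chi_{[0,1/n]}$ one has $x_n\rightarrow 0$ globally in measure while $x_n^{**}(t)=\min\{n,1/t\}\rightarrow 1/t\neq 0$. To pass to $x_n^{**}\rightarrow x^{**}$ you would have to prove uniform integrability of $(x_n^*)$ on finite intervals, and the only tools available for that are the remaining hypotheses $\norm{x_n}{E}{}\rightarrow\norm{x}{E}{}$, order continuity and symmetry of $E$; ruling out a bump $c_n\chi_{[0,\mu_n)}$ with $c_n\mu_n\geq\delta$ and $\mu_n\rightarrow 0$ is not a triangle-inequality matter, since the norm is not additive over disjointly supported parts and $c_n\phi(\mu_n)$ need not be forced to zero by convergence of the total norms. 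This no-concentration argument is exactly the substantive content of the external results the paper leans on: the paper obtains $(i)\Leftrightarrow(iii)\Leftrightarrow(v)$ by citing Theorem 3.8 in \cite{Cies-geom} together with Theorem 3.5 in \cite{CieKolPlu}, rather than rederiving it. Note also that Theorem \ref{thm:LLUKM&Hg} cannot serve as a substitute here: it contains no implication from its condition $(ii)$ back to $(i)$ except under the additional hypothesis that $x$ is already an $H_g$ point, which is precisely what you are trying to establish.

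The second gap is $(iii)\Rightarrow(iv)$. Your proposed tool, Theorem 3.2 in \cite{Cies-SKM-KOC}, applies to sequences satisfying the Hardy--Littlewood--P\'olya domination $x_n\prec x$, whereas a sequence converging only locally in measure with $\norm{x_n}{E}{}\rightarrow\norm{x}{E}{}$ carries no such domination, so the theorem does not engage and your tightness sketch has no engine. The paper disposes of $(iii)\Leftrightarrow(iv)\Leftrightarrow(v)$ in one stroke by citing Theorem 2.7 in \cite{ChDSS}, which already contains the local Kadec--Klee version; since you invoke the Chilin--Dodds--Sedaev--Sukochev characterization anyway for $(iii)\Leftrightarrow(v)$, the correct repair is to cite it for $(iv)$ as well, or else to supply the genuinely nontrivial measure-theoretic argument in full. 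The remaining parts of your proposal are sound and coincide with the paper's route: order continuity gives $x^*(\infty)=0$ for every $x\in E$ via Lemma 2.5 in \cite{CieKolPan}, so that $(ii)\Leftrightarrow(iii)$ follows from Theorem \ref{thm:LLUKM&Hg} together with Remark 3.1 in \cite{Cies-geom}, and $(iv)\Rightarrow(iii)$ is indeed formal.
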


\begin{proof}
	It is well known that the equivalences $(iii)\Leftrightarrow(iv)\Leftrightarrow(v)$ follows directly from Theorem 2.7 in \cite{ChDSS}.  Immediately, by Theorem 3.8 in  \cite{Cies-geom} and by Theorem 3.5 in \cite{CieKolPlu} we get $(i)\Leftrightarrow(iii)\Leftrightarrow(v)$. Finally, the consequence of Lemma 2.5 in \cite{CieKolPan} and Theorem \ref{thm:LLUKM&Hg} is the following conclusion $(ii)\Leftrightarrow(iii)$.
\end{proof}

\begin{theorem}\label{}
	Let $E$ be a symmetric space. If $x\in{E}$ is a point of order continuity and a $ULUKM$ point, then $|x|$ is a $ULUM$ point and $x$ is an $H_g$ point.
\end{theorem}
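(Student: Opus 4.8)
The plan is to prove the two assertions in turn and to \emph{derive} the $H_g$ conclusion from the $ULUM$ conclusion rather than attack it directly. First I record the consequences of order continuity used throughout: since $x$ is a point of order continuity, Lemma 2.5 in \cite{CieKolPan} gives $x^*(\infty)=0$, hence Lemma 3.1 in \cite{Cies-JAT} yields $x^{**}(\infty)=0$; moreover $|x|$ is itself a point of order continuity, so any sequence dominated a.e. by $|x|$ and tending to $0$ in measure tends to $0$ in $\norm{\cdot}{E}{}$ (pass to an a.e.\ convergent subsequence and invoke the subsequence principle).

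For the first assertion, let $(x_n)\subset E$ satisfy $|x|\le x_n$ (so $x_n\ge 0$) and $\norm{x_n}{E}{}\to\norm{x}{E}{}$. Pointwise domination gives $x^{**}=|x|^{**}\le x_n^{**}$, i.e.\ $x\prec x_n$, so the hypothesis that $x$ is a $ULUKM$ point yields $\norm{x_n^*-x^*}{E}{}\to 0$. Through the norm-one embedding $E\hookrightarrow M_\phi$ this forces $x_n^*\to x^*$ in measure, so along any subsequence I may assume $x_n^*\to x^*$ a.e. The decisive step — and the principal obstacle — is to upgrade this to \emph{global} convergence in measure $x_n\to |x|$. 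I would argue by contradiction: if $\mu\{x_n-|x|>\epsilon\}\ge\eta$ along a subsequence, fix $\lambda$ with $d_x(\lambda)<\eta/2$ (possible since $x^*(\infty)=0$) and split the excess set by $\{|x|>\lambda\}$ and its complement; the part inside $\{|x|>\lambda\}$ has measure $<\eta/2$, so the complementary part has measure $\ge\eta/2$. Testing against the truncation $\psi(u)=\bigl(\min(u,\lambda+\epsilon)-\tfrac{\epsilon}{2}\bigr)^+$ and using $x_n^*\to x^*$ a.e.\ together with the uniform bound $d_{x_n}(\epsilon/2)\le T<\infty$ (again from $x^*(\infty)=0$) gives $\psi(x_n)\to\psi(|x|)$ in $L^1(I)$ by dominated convergence, while $x_n\ge|x|$ forces $\psi(x_n)-\psi(|x|)\ge\epsilon/2$ on the persistent complementary excess set — a contradiction. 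Once global convergence in measure is secured, $\norm{x_n^*-x^*}{E}{}\to 0$, the order continuity of $|x|$, and Proposition 2.4 in \cite{CzeKam} give $\norm{x_n-|x|}{E}{}\to 0$; hence $|x|$ is a $ULUM$ point.

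For the second assertion, let $x_n\to x$ globally in measure with $\norm{x_n}{E}{}\to\norm{x}{E}{}$; then $|x_n|\to|x|$ globally in measure. Put $w_n=|x|\vee|x_n|\ge|x|$. Since $(|x|-|x_n|)^+\le|x|$ tends to $0$ in measure and $x$ is a point of order continuity, $\norm{w_n-|x_n|}{E}{}=\norm{(|x|-|x_n|)^+}{E}{}\to 0$, whence $\norm{w_n}{E}{}\to\norm{x}{E}{}$ (using also $\norm{w_n}{E}{}\ge\norm{x}{E}{}$). As $|x|\le w_n$, the $ULUM$ property just proved gives $\norm{(|x_n|-|x|)^+}{E}{}=\norm{w_n-|x|}{E}{}\to 0$, and adding the two one-sided estimates yields $\norm{\,|x_n|-|x|\,}{E}{}\to 0$. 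To pass from moduli to the signed difference I use the pointwise identity $|x_n-x|=\bigl|\,|x_n|-|x|\,\bigr|+2\min\bigl(|x_n|,|x|\bigr)\chi_{S_n}$ with $S_n=\{x_nx<0\}$: on $S_n$ one has $|x_n-x|=|x_n|+|x|\ge|x|$, so $|x|\chi_{S_n}\to 0$ in measure, and since $|x|\chi_{S_n}\le|x|$ with $x$ order continuous, $\norm{|x|\chi_{S_n}}{E}{}\to 0$. Combining, $\norm{x_n-x}{E}{}\le\norm{\,|x_n|-|x|\,}{E}{}+2\norm{|x|\chi_{S_n}}{E}{}\to 0$, so $x$ is an $H_g$ point.

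The routine ingredients here are the order-continuity consequences, Proposition 2.4 in \cite{CzeKam} (measure convergence, convergence of rearrangements in norm, and order continuity together force norm convergence), and the elementary lattice identity separating the sign. The one genuinely technical point I expect to fight is the passage from $\norm{x_n^*-x^*}{E}{}\to 0$ and the one-sided domination $x_n\ge|x|$ to \emph{global} convergence in measure $x_n\to|x|$; this is the upper analogue of the step effected via Lemma 2.7 in \cite{CieKolPan} for lower domination, and may well be citable from a dual lemma there, which would shorten the argument considerably.
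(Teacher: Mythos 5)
Your proposal is correct, and it splits relative to the paper: the $ULUM$ half follows essentially the paper's route, while the $H_g$ half takes a genuinely different, self-contained path. For the first assertion the paper argues exactly as you do --- $|x|\le x_n$ gives $x\prec x_n$ (citing Proposition 3.2 in \cite{BS}), the $ULUKM$ hypothesis gives $\norm{x_n^*-x^*}{E}{}\rightarrow 0$, and Proposition 2.4 in \cite{CzeKam} converts global convergence in measure plus order continuity into norm convergence --- except that for the step you flagged as the ``principal obstacle'' (upgrading the one-sided domination $x_n\ge|x|$ and rearrangement convergence to global convergence in measure $x_n\rightarrow|x|$) the paper simply cites the implication $(iii)\Rightarrow(ii)$ in the proof of Theorem 3.2 in \cite{ChDSS}; your closing suspicion that this step is citable was exactly right, and your truncation argument with $\psi(u)=\bigl(\min(u,\lambda+\epsilon)-\tfrac{\epsilon}{2}\bigr)^+$ is a sound inline replacement (what dominated convergence delivers, via the equimeasurability $\int_I\psi(x_n)=\int_I\psi(x_n^*)$ and the eventual uniform support bound, is convergence of the integrals, which suffices since $\psi(x_n)\ge\psi(|x|)$ pointwise makes this equivalent to $L^1$-convergence of the difference, contradicting the lower bound $\tfrac{\epsilon}{2}\mu(B_n)\ge\tfrac{\epsilon\eta}{4}$). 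For the second assertion the paper does not argue at all: it invokes Theorem 3.8 in \cite{Cies-geom} together with Theorem 3.5 in \cite{CieKolPlu}, which encapsulate precisely the implication ``point of order continuity whose modulus is a $ULUM$ point implies $H_g$ point''. Your direct derivation --- setting $w_n=|x|\vee|x_n|$, getting $\norm{(|x|-|x_n|)^+}{E}{}\rightarrow 0$ from order continuity (via measure convergence, domination by $|x|$, and the subsequence principle), getting $\norm{(|x_n|-|x|)^+}{E}{}=\norm{w_n-|x|}{E}{}\rightarrow 0$ from the freshly proved $ULUM$ property of $|x|$ after checking $\norm{w_n}{E}{}\rightarrow\norm{x}{E}{}$ by squeezing, and then killing the sign discrepancy through $|x|\chi_{S_n}\rightarrow 0$ in measure on $S_n=\{x_nx<0\}$ --- is correct and in effect reproves the local version of the cited Kadec--Klee result inline. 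What each approach buys: the paper's version is two lines at the cost of two external theorems, whereas yours is self-contained and makes explicit that the $H_g$ conclusion is a genuine consequence of the $ULUM$ conclusion together with order continuity rather than an independent claim.
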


\begin{proof}
	Let $(x_n)\subset{E^+}$, $|x|\leq{}x_n$ and $\norm{x_n}{E}{}\rightarrow\norm{x}{E}{}$. Then, by Proposition 3.2 in \cite{BS} we get $x\prec{x_n}$ for all $n\in\mathbb{N}$ and consequently by assumption that $x$ is a $ULUKM$ point we have $\norm{x_n^*-x^*}{E}{}\rightarrow{0}$. Hence, by the implication $(iii)\Rightarrow(ii)$ in the proof of Theorem 3.2 in \cite{ChDSS} it follows that $x_n$ converges to $|x|$ in measure. Consequently, by assumption that $x$ is a point of order continuity and by Proposition 2.4 in \cite{CzeKam} we have $\norm{x_n-|x|}{E}{}\rightarrow{0}$. Finally, in view of assumptions, by Theorem 3.8 in \cite{Cies-geom} and by Theorem 3.5 in \cite{CieKolPlu} we conclude $x$ is an $H_g$ point in $E$.
\end{proof}

In the next example we show that if the assumption $x$ is a point of order continuity of the above theorem is missing, then the implication is not true. 

\begin{example}
Take $E=L^\infty$ on $I=[0,\infty)$	and $x=\chi_{I}$. Let $(x_n)\subset{E}$ be such that $x\prec{x_n}$ for any $n\in\mathbb{N}$ and $\norm{x_n}{E}{}\rightarrow\norm{x}{E}{}$. Since $x^*=1$ on $I$, we claim that $x^*\leq{}x_n^*$ a.e for all $n\in\mathbb{N}$. Indeed, if it is not true, then there exist $(n_k)\subset\mathbb{N}$ and $(t_k)\subset{I}$ such that for any $k\in\mathbb{N}$ and $t\geq{t_k}$ we have
\begin{equation*}
x_{n_k}^*(t)\leq{}x_{n_k}^*(t_k)<1.
\end{equation*}
Hence, setting $k\in\mathbb{N}$ we observe for sufficiently large $t>t_k$,  $$x_{n_k}^{**}(t)<x^{**}(t)=1.$$
Therefore, by assumption $x\prec{x_n}$ for all $n\in\mathbb{N}$ we get a contradiction which proves our claim. It is easy to notice that $x$ is a $ULUM$ point in $E$ (see also \cite{CieKolPan}). Thus, according to the claim and by assumption $\norm{x_n^*}{E}{}\rightarrow\norm{x^*}{E}{}$ we obtain $$\norm{x_n^*-x^*}{E}{}\rightarrow{0}.$$
In consequence, we get $x$ is a $ULUKM$ point. On the other hand, taking $y_n=\chi_{(\frac{1}{n},\infty)}$ for any $n\in\mathbb{N}$, it is easy to see that $y_n\rightarrow{x}$ in measure and $\norm{y_n}{E}{}=\norm{x}{E}{}=1$ and also $\norm{x-y_n}{E}{}=1$ for every $n\in\mathbb{N}$. So, it follows that $x$ is no $H_g$ point in $E$. 
\end{example}

Now we discuss a correlation between $K$-order continuity and lower local uniform $K$-monotonicity in symmetric spaces. 

\begin{theorem}\label{thm1:KOC&LLUKM}
	Let $E$ be a symmetric space. If $x\in{E}$ is a point of $K$-order continuity and an $LKM$ point and also $x^*(\infty)=0$, then $x$ is an $LLUKM$ point.
\end{theorem}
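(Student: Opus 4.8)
The plan is to verify the definition of an $LLUKM$ point directly. Fix $(x_n)\subset E$ with $x_n\prec x$ for every $n\in\mathbb{N}$ and $\norm{x_n}{E}{}\to\norm{x}{E}{}$; the goal is to show $\norm{x^*-x_n^*}{E}{}\to 0$. Since $x$ is an $LKM$ point and $x^*(\infty)=0$, Theorem 3.2 in \cite{Cies-SKM-KOC} applies and gives that $x_n^*$ converges to $x^*$ globally in measure. Writing $|x^*-x_n^*|=(x^*-x_n^*)^++(x_n^*-x^*)^+$, the triangle inequality reduces the whole problem to proving the two convergences $\norm{(x^*-x_n^*)^+}{E}{}\to 0$ and $\norm{(x_n^*-x^*)^+}{E}{}\to 0$ separately.

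The crux is that, unlike the order-continuity argument of Theorem \ref{thm:LLUKM<=>LKM&OC}, here both truncated sequences can be handled symmetrically by $K$-order continuity alone. I would put $u_n=(x^*-x_n^*)^+$ and $v_n=(x_n^*-x^*)^+$. From the pointwise bounds $u_n\leq x^*$ and $v_n\leq x_n^*$ I would obtain $u_n^*\leq x^*$ and $v_n^*\leq x_n^*$; integrating and using $x_n^{**}\leq x^{**}$ then yields $u_n\prec x$ and $v_n\prec x$ for all $n$. For the pointwise behaviour of the rearrangements, Lemma 3.1 in \cite{Cies-JAT} turns $x^*(\infty)=0$ into $x^{**}(\infty)=0$, so both $u_n^*$ and $v_n^*$ are dominated by a function vanishing at infinity; combining the in-measure convergences $u_n\to 0$ and $v_n\to 0$ (property 2.11 in \cite{KPS}) with property 2.12 in \cite{KPS} gives $u_n^*\to 0$ and $v_n^*\to 0$ in measure, hence a.e. on $I$ since these rearrangements are decreasing with null limit.

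It then remains only to invoke the hypothesis. Since $u_n\prec x$ with $u_n^*\to 0$ a.e., and $v_n\prec x$ with $v_n^*\to 0$ a.e., the fact that $x$ is a point of $K$-order continuity applies verbatim to each sequence and forces $\norm{u_n}{E}{}\to 0$ and $\norm{v_n}{E}{}\to 0$; the triangle inequality then completes the argument. I expect the main obstacle to lie not in applying $K$-order continuity, which is the clean final step, but in the preparatory passage from the in-measure convergence of $x_n^*$ to the a.e. convergence of the rearranged truncations $u_n^*$ and $v_n^*$. This is precisely where $x^*(\infty)=0$, through $x^{**}(\infty)=0$, is indispensable, as it licenses the continuity of the rearrangement operation used in the last step of the second paragraph.
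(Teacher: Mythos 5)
Your argument is correct and takes essentially the same route as the paper's own proof: the same reduction via $|x^*-x_n^*|=(x^*-x_n^*)^++(x_n^*-x^*)^+$, the same appeal to Theorem 3.2 in \cite{Cies-SKM-KOC} (using the $LKM$ hypothesis and $x^*(\infty)=0$) to get $x_n^*\rightarrow x^*$ in measure, the same majorizations $(x^*-x_n^*)^+\prec x$ and $(x_n^*-x^*)^+\prec x_n^*\prec x$, and the same double application of $K$-order continuity followed by the triangle inequality. Your extra detour through $x^{**}(\infty)=0$ via Lemma 3.1 in \cite{Cies-JAT} is a harmless elaboration of the step the paper handles directly with properties 2.11--2.12 of \cite{KPS}, not a genuinely different argument.
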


\begin{proof}
	Let $(x_n)\subset{E}$ with $x_n\prec{x}$ for all $n\in\mathbb{N}$ and $\norm{x_n}{E}{}\rightarrow\norm{x}{E}{}$. 
	Observe that for each $n\in\mathbb{N}$,
	\begin{equation}\label{equ:1:thm1:KOC}
	(x^*-x_n^*)^+\leq{x^*}\quad\textnormal{and}\quad(x_n^*-x^*)^+\prec{}x_n^*\prec{x^*}.
	\end{equation}
	Moreover, since $x$ is $LKM$ point and $x^*(\infty)=0$, by assumption that $x_n\prec{x}$ for any $n\in\mathbb{N}$ and $\norm{x_n}{E}{}\rightarrow\norm{x}{E}{}$ and by Theorem 3.2 in \cite{Cies-SKM-KOC} it follows that $x_n^*$ converges to $x^*$ in measure. Hence, by property $2.11$ in \cite{KPS} we get $$\left((x_n^*-x^*)^+\right)^*\rightarrow{0}\quad\textnormal{and}\quad\left((x^*-x_n^*)^+\right)^*\rightarrow{0}$$ a.e. on $I$. In consequence, by condition \eqref{equ:1:thm1:KOC} and by assumption that $x$ is a point of $K$-order continuity we have
	\begin{equation*}
	\norm{((x^*-x_n^*)^+)^*}{E}{}\rightarrow{0}\quad\textnormal{and}\quad\norm{((x_n^*-x^*)^+)^*}{E}{}\rightarrow{0}.
	\end{equation*}
	Thus, by symmetry of $E$ and by the triangle inequality of the norm in $E$ we conclude $x_n^*$ converges to $x^*$ in norm of $E$.
\end{proof}

We present an example of a symmetric space having upper and lower local uniform $K$-monotonicity but not satisfying $K$-order continuity. 

\begin{remark}
Let $\psi(t)=t^{1/4}$ for any $t\in{I}$. Consider the space $E=\Lambda_{1,\psi'}\cap{}L^1$ on $I$ endowed with the equivalent norm given by $\norm{x}{E}{}=\norm{x}{\Lambda_{1,\psi'}}{}+\norm{x}{L^1}{}$. We claim that $(E,\norm{\cdot}{E}{})$ is $LLUKM$ and $ULUKM$, but it is not $KOC$. First denote $\phi(t)=\psi(t)+t$ for any $t\in{I}$. Observe that $E=\Lambda_{1,\phi'}$ and $\phi(t)/t\rightarrow{1}$ as $t\rightarrow\infty$. Define 
\begin{equation*}
x(t)=\chi_{[0,1)}(t)+\frac{1}{t^2}\chi_{[1,\infty)}(t)\quad\textnormal{and}\quad{}x_n(t)=\frac{1}{n}\chi_{[0,n)}(t)
\end{equation*}
for any $t>0$ and $n\in\mathbb{N}$. It is easy to see that $x=x^*$, $x_n=x_n^*\rightarrow{0}$ a.e. Clearly,
\begin{equation*}
x^{**}(t)=\chi_{[0,1)}(t)+\frac{2{t}-1}{t^2}\chi_{[1,\infty)}(t)
\end{equation*}
and
\begin{equation*}
{x_n^{**}(t)=\frac{1}{n}\chi_{[0,n)}(t)+\frac{1}{t}\chi_{[n,\infty)}(t)}
\end{equation*}
for any $t>0$ and  $n\in\mathbb{N}$, whence $x_n\prec{x}$ for all $n\in\mathbb{N}$. Notice that $x\in{E}$ and
\begin{equation*}
\norm{x_n}{E}{}=\norm{x_n}{\Lambda_{1,\psi'}}{}+\norm{x_n}{L^1}{}=1+\frac{1}{n^{3/4}}
\end{equation*}
for any $n\in\mathbb{N}$. Therefore, $\norm{x_n}{E}{}\geq{1}$ for every $n\in\mathbb{N}$, which concludes that $E$ is not $KOC$. On the other hand, since $\phi(\infty)=\int_{0}^{\infty}\phi'=\infty$, by Proposition 1.4 in \cite{KMGam} it follows that the Lorentz space $\Lambda_{1,\phi'}$ is order continuous. Hence, since $\phi$ is strictly concave, by Theorem 2.11 and Proposition 1.7 in \cite{ChDSS} we obtain that $\Lambda_{1,\phi'}$ is strictly $K$-monotone and also has the Kadec-Klee property for global convergence in measure. Finally, by Theorem \ref{thm:complete:charac} we get $E$ is $ULUKM$ and $LLUKM$.
\end{remark}

According to Theorem 4.8 in \cite{Cies-SKM-KOC} and by Remark 3.1 in \cite{Cies-geom} and also Lemma \ref{lem:LLUKM=>OC} as well as Theorem \ref{thm1:KOC&LLUKM} we conclude with the next theorem.

\begin{theorem}\label{thm:LLUKM&KOC&LKM}
Let $E$ be a symmetric space and let $\phi$ be the fundamental function of $E$ and $x\in{E}$. Then, the following conditions are equivalent.
\begin{itemize}
    \item[$(i)$]   $x$ is an $LLUKM$ point and $$\lim_{t\rightarrow{0^+}}\phi(t)x^{*}(t)=\lim_{s\rightarrow\infty}\phi(s)x^{**}(s)={0}.$$
	\item[$(ii)$]  $x$ is an $LKM$ point and a point of order continuity and $$\lim_{s\rightarrow\infty}\phi(s)x^{**}(s)={0}.$$
	\item[$(iii)$] $x$ is an $LKM$ point and a point of $K$-order continuity and $x^*(\infty)=0$.
\end{itemize} 
\end{theorem}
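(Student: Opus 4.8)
The plan is to prove the statement as a single cyclic chain $(i)\Rightarrow(ii)\Rightarrow(iii)\Rightarrow(i)$, treating the three decay conditions as the glue that lets the four cited ingredients interlock: Remark 3.1 in \cite{Cies-geom} ($LLUKM$ point $\Rightarrow$ $LKM$ point), Lemma \ref{lem:LLUKM=>OC}, Theorem \ref{thm1:KOC&LLUKM}, and Theorem 4.8 in \cite{Cies-SKM-KOC}. The decisive external input is the last one, which I read as the pointwise characterization: $x$ is a point of $K$-order continuity if and only if $x$ is a point of order continuity and $\lim_{s\to\infty}\phi(s)x^{**}(s)=0$. Under this reading the whole statement becomes bookkeeping: the $t\to 0^+$ decay of $x^*$ governs order continuity at the small end, the $s\to\infty$ decay of $x^{**}$ is exactly the extra ``$K$'' information needed to promote order continuity to $K$-order continuity, and $x^*(\infty)=0$ is a free consequence on each branch.

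For $(i)\Rightarrow(ii)$ I would argue as follows. Since $x$ is an $LLUKM$ point, Remark 3.1 in \cite{Cies-geom} gives that $x$ is an $LKM$ point; since moreover $\lim_{t\to 0^+}\phi(t)x^*(t)=0$, Lemma \ref{lem:LLUKM=>OC} yields that $x$ is a point of order continuity. The condition $\lim_{s\to\infty}\phi(s)x^{**}(s)=0$ is carried over unchanged, so $(ii)$ holds. For $(ii)\Rightarrow(iii)$ the $LKM$ hypothesis is common to both; feeding order continuity of $x$ together with $\lim_{s\to\infty}\phi(s)x^{**}(s)=0$ into Theorem 4.8 in \cite{Cies-SKM-KOC} upgrades $x$ to a point of $K$-order continuity, while order continuity of $x$ forces $x^*(\infty)=0$ (Lemma 2.5 in \cite{CieKolPan}); this is precisely $(iii)$.

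For $(iii)\Rightarrow(i)$ I would apply Theorem \ref{thm1:KOC&LLUKM} to the $K$-order continuous $LKM$ point $x$ with $x^*(\infty)=0$ to obtain the $LLUKM$ conclusion, and then recover the two decay conditions. Reading Theorem 4.8 in \cite{Cies-SKM-KOC} in the necessity direction, $K$-order continuity of $x$ returns both that $x$ is a point of order continuity and that $\lim_{s\to\infty}\phi(s)x^{**}(s)=0$; order continuity in turn gives $\lim_{t\to 0^+}\phi(t)x^*(t)=0$ via the elementary estimate $\phi(t)x^*(t)=\norm{x^*(t)\chi_{(0,t)}}{E}{}\le\norm{x^*\chi_{(0,t)}}{E}{}\to 0$ as $t\to 0^+$. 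This closes the cycle and yields $(i)$.

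The argument contains no genuinely hard analytic step, since the content has been absorbed into the cited theorems; the one place demanding care is the correct invocation of Theorem 4.8 in \cite{Cies-SKM-KOC}. I must use its \emph{sufficiency} direction in $(ii)\Rightarrow(iii)$ (order continuity plus the $s\to\infty$ decay yields $K$-order continuity) and its \emph{necessity} direction in $(iii)\Rightarrow(i)$ (to retrieve both order continuity and the $s\to\infty$ decay), taking care that the fundamental-function decay is matched to the large-scale behaviour of $x^{**}$ and not confused with the small-scale $t\to 0^+$ decay of $x^*$. A useful consistency check to record is that $x^*(\infty)=0$ — explicit in $(iii)$ — is automatic in $(i)$ by Lemma \ref{lem:1:llukm} and in $(ii)$ by order continuity, so that no branch secretly assumes more than the others.
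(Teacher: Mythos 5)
Your proposal is correct and follows essentially the same route as the paper: the paper proves this theorem by a one-line combination of exactly the four ingredients you invoke (Theorem 4.8 in \cite{Cies-SKM-KOC}, Remark 3.1 in \cite{Cies-geom}, Lemma \ref{lem:LLUKM=>OC}, and Theorem \ref{thm1:KOC&LLUKM}), and your cyclic chain $(i)\Rightarrow(ii)\Rightarrow(iii)\Rightarrow(i)$ is just a careful expansion of that combination, with the correct reading of Theorem 4.8 as the pointwise equivalence between $K$-order continuity and order continuity plus $\lim_{s\rightarrow\infty}\phi(s)x^{**}(s)=0$. Your auxiliary steps (Lemma 2.5 in \cite{CieKolPan} for $x^{*}(\infty)=0$, and the estimate $\phi(t)x^{*}(t)\leq\norm{x^{*}\chi_{(0,t)}}{E}{}\rightarrow 0$ from order continuity) are exactly the observations the paper itself uses elsewhere, e.g.\ in the proof of Theorem \ref{thm:LLUKM<=>LKM&OC}.
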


\subsection*{Acknowledgement}

We wish to express our gratitude to the reviewer for many valuable suggestions and remarks.

\bibliographystyle{amsplain}

\end{document}